\documentclass[reqno, 12pt]{amsart}
\pdfoutput=1
\makeatletter
\let\origsection=\section \def\section{\@ifstar{\origsection*}{\mysection}} 
\def\mysection{\@startsection{section}{1}\z@{.7\linespacing\@plus\linespacing}{.5\linespacing}{\normalfont\scshape\centering\S}}
\makeatother        

\usepackage{amsmath,amssymb,amsthm}
\usepackage{mathrsfs}
\usepackage{mathabx}\changenotsign
\usepackage{dsfont}
 
\usepackage{xcolor}
\usepackage[backref]{hyperref}
\hypersetup{
    colorlinks,
    linkcolor={red!60!black},
    citecolor={green!60!black},
    urlcolor={blue!60!black}
}

\usepackage{graphicx}

\usepackage{verbatim}

\usepackage[open,openlevel=2,atend]{bookmark}

\usepackage[abbrev,msc-links,backrefs]{amsrefs} 
\usepackage{doi}

\renewcommand{\PrintDOI}[1]{\doi{#1}}

\usepackage[T1]{fontenc}
\usepackage{lmodern}
\usepackage[babel]{microtype}
\usepackage[english]{babel}

\linespread{1.3}
\usepackage{geometry}
\geometry{left=27.5mm,right=27.5mm, top=25mm, bottom=25mm}
\numberwithin{equation}{section}
\numberwithin{figure}{section}

\usepackage{enumitem}

\let\polishlcross=\l
\def\l{\ifmmode\ell\else\polishlcross\fi}

\def\paragraph#1{%
  \noindent\textbf{#1.}\enspace}

\let\emptyset=\varnothing
\let\setminus=\smallsetminus

\makeatletter
\def\moverlay{\mathpalette\mov@rlay}
\def\mov@rlay#1#2{\leavevmode\vtop{   \baselineskip\z@skip \lineskiplimit-\maxdimen
   \ialign{\hfil$\m@th#1##$\hfil\cr#2\crcr}}}
\newcommand{\charfusion}[3][\mathord]{
    #1{\ifx#1\mathop\vphantom{#2}\fi
        \mathpalette\mov@rlay{#2\cr#3}
      }
    \ifx#1\mathop\expandafter\displaylimits\fi}
\makeatother

\DeclareFontFamily{U}  {MnSymbolC}{}
\DeclareSymbolFont{MnSyC}         {U}  {MnSymbolC}{m}{n}
\DeclareFontShape{U}{MnSymbolC}{m}{n}{
    <-6>  MnSymbolC5
   <6-7>  MnSymbolC6
   <7-8>  MnSymbolC7
   <8-9>  MnSymbolC8
   <9-10> MnSymbolC9
  <10-12> MnSymbolC10
  <12->   MnSymbolC12}{}
\DeclareMathSymbol{\powerset}{\mathord}{MnSyC}{180}

\theoremstyle{plain}
\newtheorem{thm}{Theorem}[section]
\newtheorem{theorem}[thm]{Theorem}
\newtheorem{lemma}[thm]{Lemma}

\newtheorem{case}{Case}

\newtheorem{question}[thm]{Question}
\newtheorem*{claim*}{Claim}

\theoremstyle{definition}

\usepackage{accents}

\begin{document}

\author[K.~Heuer]{Karl Heuer}
\address{Karl Heuer, Technical University of Denmark, Department of Applied Mathematics and Computer Science, Richard Petersens Plads, Building 322, 2800 Kongens Lyngby, Denmark}
\email{\tt karheu@dtu.dk}

\title[]{Hamilton-laceable bi-powers of locally finite bipartite graphs}
\subjclass[2020]{05C63, 05C45, 05C76}
\keywords{Hamiltonicity, bipartite graphs, bi-power, locally finite graphs, Freudenthal compactification.}

\begin{abstract}
In this paper we strengthen a result due to Li by showing that the third bi-power of a locally finite connected bipartite graph that admits a perfect matching is Hamilton-laceable, i.e. any two vertices from different bipartition classes are endpoints of some common Hamilton arc.
\end{abstract}

\maketitle

\section{Introduction}\label{sec:intro}

Recently, Li~\cite{bi-power_li} restricted the power operation for graphs to preserve bipartiteness by defining another operation, called the \textit{bi-power} of a graph, which is defined as follows:

Let $G$ be a graph and $k \in \mathbb{N}$.
For two vertices $x, y \in V(G)$ let $\textnormal{dist}_G(x,y)$ denote the distance in $G$ between $x$ and $y$.
Then the \textit{$k$-th bi-power} $G_B^k$ of $G$ is defined as:
\begin{align*}
V(G_B^k) &:= V(G),\\
E(G_B^k) &:= \{ xy \; | \; \textnormal{dist}_G(x,y) \textnormal{ is odd and at most } k \textnormal{ where } x,y \in V(G) \}.
\end{align*}

Note that $G^2_B = G^1_B = G$.
Also, the $k$-th bi-power of a bipartite graph is still bipartite.

Two classical Hamiltonicity results for finite graphs that involve the (usual) square and cube of a graph are the following ones by Fleischner and Sekanina.

\begin{theorem}\cite{fleisch}\label{thm:fin_fleisch}
The square of any finite $2$-connected graph is Hamiltonian.
\end{theorem}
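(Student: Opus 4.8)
The plan is to prove a stronger, more flexible statement by induction over an ear decomposition of $G$, since bare Hamiltonicity of $G^2$ is too rigid to be preserved when new vertices are added. Concretely, I would aim for the following strengthening: for every finite $2$-connected $G$, every vertex $v$, and every two distinct edges $e,f \in E(G)$ incident with $v$, the square $G^2$ has a Hamilton cycle containing both $e$ and $f$. Prescribing two genuine $G$-edges at a vertex is meant to supply the local freedom needed to splice in fresh vertices, and it specializes to the theorem by simply forgetting the prescription. As will become clear, this first-attempt strengthening almost certainly has to be augmented with further edge information before the induction closes.

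Recall that every $2$-connected graph admits an open ear decomposition $G = C \cup P_1 \cup \cdots \cup P_k$, where $C$ is a cycle, each $P_i$ is a path (an \emph{ear}) meeting the previously built graph in exactly its two endpoints, and every intermediate graph is again $2$-connected. In the base case $G = C$ the vertex $v$ has precisely two incident edges, namely $e$ and $f$, and $C$ itself is already a Hamilton cycle of $C \subseteq C^2$ through both. For the inductive step I would write $G = G' \cup P$ with $G'$ the previous ($2$-connected) graph and $P = a\,x_1 \cdots x_m\, b$ an ear whose internal vertices $x_1,\dots,x_m$ are new, apply the inductive hypothesis to $G'$ to obtain a Hamilton cycle $H'$ of $(G')^2$ through the prescribed edges, and then modify $H'$ so as to absorb $x_1,\dots,x_m$.

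The absorption rests on the observation that a $G$-edge $ac$ together with the ear gives $\mathrm{dist}_G(c,x_1)\le 2$, so that $cx_1$ is again an edge of $G^2$. For a short ear one can therefore break such a $G$-edge $ac$ lying on $H'$ and reroute through the ear; for instance, when $m=1$ one replaces $ac$ by the detour $a\,x_1\,c$. Since the prescribed edges all sit at the single vertex $v$ and $a\neq b$, at least one attachment vertex can be tampered with without destroying $e$ or $f$. Note, however, that this presupposes that $H'$ actually runs along a genuine $G$-edge at that attachment vertex rather than along a distance-$2$ chord; guaranteeing as much is itself something the inductive statement must carry, and it foreshadows the bookkeeping below.

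The hard part will be a long ear. A detour that leaves an attachment vertex and must return to one of its neighbours can only swallow boundedly many internal vertices, because the far end $x_m$ of a long ear is nowhere near $a$ in $G$; to cover the whole ear one really wants to traverse it as a single $a$--$b$ segment of the new cycle, which in turn requires $H'$ to pass directly from $a$ to $b$. The clean way to force this is to suppress the ear to a virtual chord $ab$, induct on the smaller $2$-connected graph $G^{\ast} = (G' - \{x_1,\dots,x_m\}) + ab$ with $ab$ among the prescribed edges, and then re-expand the chord into $a\,x_1\cdots x_m\, b$. The genuine crux is that squaring does not commute with this suppression: a distance-$2$ pair of $G^{\ast}$ whose shortest path runs through the virtual chord $ab$ need not be within distance $2$ in $G$, so a few edges of $(G^{\ast})^2$ are simply unavailable in $G^2$. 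Controlling this forces the inductive statement to carry extra information, namely also to forbid the Hamilton cycle from using those ``illegal'' square-edges at $a$ and $b$; keeping this edge-bookkeeping consistent through every ear, together with the degenerate cases of very short ears and of ears attaching at or next to $v$, is where essentially all of the work lies.
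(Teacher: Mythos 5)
The paper does not prove this statement: it is Fleischner's theorem, quoted from \cite{fleisch} as classical background, so there is no in-paper proof to compare yours against. Judged on its own, your proposal is a plan rather than a proof, and the steps you defer are precisely where the entire difficulty of Fleischner's theorem lives. Your instinct to strengthen the induction by prescribing genuine $G$-edges of the Hamilton cycle at specified vertices is the right one --- this is essentially the ``versatile'' version of the theorem that the known short proofs establish --- but the inductive statement you actually formulate does not close under the ear reduction you describe. Concretely: to re-expand a long ear you must force the Hamilton cycle of $(G^{\ast})^2$ to traverse the virtual chord $ab$, i.e.\ you must prescribe an edge at $a$ (or $b$), while simultaneously keeping the two prescribed edges at $v$; your hypothesis only allows prescriptions at a single vertex, so it cannot be invoked in the form the reduction needs, and the case $v\in\{a,b\}$ (three prescribed edges at one vertex of a cycle) shows that the naive two-vertex extension is not even consistent as stated.

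The same problem infects the short-ear case: breaking a $G$-edge of $H'$ at an attachment vertex presupposes that $H'$ uses a genuine $G$-edge there, which your hypothesis guarantees only at $v$. And the central claim --- that one can always find a Hamilton cycle of $(G^{\ast})^2$ through $ab$ that avoids the ``illegal'' square-edges at $a$ and $b$ created by suppressing the ear --- is asserted, not argued. Formulating an inductive statement that simultaneously carries prescriptions at two vertices, purity conditions (that certain cycle edges be edges of $G$ rather than square-chords), and forbidden-edge conditions, and then verifying it through all degenerate ear configurations, is not bookkeeping; it is the theorem. As written, the proposal has a genuine gap and does not constitute a proof.
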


\begin{theorem}\cite{sekanina}\label{thm:fin-sekanina}
The cube of any finite connected graph on at least $3$ vertices is Hamiltonian.
\end{theorem}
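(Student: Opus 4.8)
The plan is to reduce the statement to trees and then prove a convenient strengthening by induction. First I would use that a connected graph $G$ contains a spanning tree $T$; since $E(T) \subseteq E(G)$ we have $E(T^3) \subseteq E(G^3)$, so any Hamilton cycle of $T^3$ is also one of $G^3$. Hence it suffices to prove that $T^3$ is Hamiltonian for every tree $T$ on at least $3$ vertices. Note that in the (usual) cube, two distinct vertices are adjacent exactly when their distance in $T$ is at most $3$, so no parity condition enters.

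To set up an induction I would not aim at Hamiltonicity directly, but at the following stronger path-version: \emph{for every tree $T$ with at least two vertices and every edge $uv \in E(T)$, the cube $T^3$ contains a Hamilton path with endpoints $u$ and $v$.} This at once yields the theorem: for a tree on at least $3$ vertices I pick any edge $uv$, take such a Hamilton path, and close it up using the edge $uv \in E(T) \subseteq E(T^3)$ to obtain a Hamilton cycle.

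The induction is on $|V(T)|$, with the single-edge tree as the trivial base case. For the inductive step I fix $uv \in E(T)$ and delete this edge, splitting $T$ into the subtree $T_u$ containing $u$ and the subtree $T_v$ containing $v$; since the unique path in $T$ between two vertices of a subtree stays inside that subtree, distances inside $T_u$ and $T_v$ agree with distances in $T$, so $T_u^3$ and $T_v^3$ are subgraphs of $T^3$. If both parts have at least two vertices, I choose a $T$-neighbour $u'$ of $u$ inside $T_u$ and a $T$-neighbour $v'$ of $v$ inside $T_v$, apply the induction hypothesis to obtain Hamilton paths from $u$ to $u'$ in $T_u^3$ and from $v$ to $v'$ in $T_v^3$, and join them through the edge $u'v'$ to produce a single Hamilton path of $T^3$ from $u$ to $v$. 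The crucial point is that $\mathrm{dist}_T(u',v') \le \mathrm{dist}_T(u',u) + \mathrm{dist}_T(u,v) + \mathrm{dist}_T(v,v') = 3$, so the connecting edge $u'v'$ genuinely lies in $T^3$. If instead one side, say $T_v$, is the single vertex $v$ (that is, $v$ is a leaf with neighbour $u$), then $T_u = T - v$ still has at least two vertices; I take a Hamilton path from $u$ to a $T$-neighbour $u'$ of $u$ in $T_u^3$ and simply append $v$ at the end, which is legitimate because $\mathrm{dist}_T(u',v) = 2 \le 3$.

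I expect the only real obstacle to be identifying the right inductive statement: phrasing it for a \emph{prescribed} edge $uv$ is exactly what allows the edge-deletion split to recombine cleanly, since the two half-paths can be reattached through $u'v'$. The number $3$ enters precisely in the bound $\mathrm{dist}_T(u',v') \le 3$, which is why the argument succeeds for the cube but would break for the square; this matches the fact that Theorem~\ref{thm:fin_fleisch} requires the additional hypothesis of $2$-connectivity.
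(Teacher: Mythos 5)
Your proposal is correct. Note, however, that the paper does not prove Theorem~\ref{thm:fin-sekanina} at all: it is quoted as a classical result of Sekanina and used only as motivation, so there is no in-paper proof to compare against. Your argument is the standard one for this theorem --- pass to a spanning tree, strengthen to the statement that for every edge $uv$ of a tree $T$ on at least two vertices the cube $T^3$ has a Hamilton $u$--$v$ path, and induct on $|V(T)|$ by deleting $uv$ and reattaching the two half-paths through an edge $u'v'$ with $\mathrm{dist}_T(u',v')\le 3$. All the steps check out: the components of $T-uv$ are convex subtrees, so their cubes embed in $T^3$; both sides cannot be singletons when $|V(T)|\ge 3$; and the degenerate one-vertex side is handled by appending a vertex at distance $2$. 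It is worth observing that this is precisely the template the paper itself follows for the bi-power analogue: Lemma~\ref{lem:key-lemma} plays the role of your strengthened path statement (with prescribed endpoints coming from a matching edge rather than an arbitrary edge), and the proof of Theorem~\ref{main} performs the same split-at-an-edge-and-recombine induction, with the extra bookkeeping needed because bi-power adjacency requires odd distance and because ends of the infinite graph must be controlled via end-faithful spanning trees and finite fundamental cuts.
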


Both of these results have been extended to \textit{locally finite} infinite graphs, i.e.~graphs where every vertex has finite degree, by Georgakopoulos~\cite{agelos}*{Thm.\ 3, Thm.\ 5}.
The crucial conceptional starting point of Georgakopoulos' work is the topological approach initiated by Diestel and K\"{u}hn~\cites{inf-cyc-1, inf-cyc-2}.
They defined infinite cycles as \textit{circles}, i.e.~homeomorphic images of the unit circle $S^1 \subseteq \mathbb{R}^2$ within the Freudenthal compactification $|G|$~\cites{diestel_buch, diestel_arx} of a locally finite connected graph $G$.
Using this notation, a \textit{Hamilton circle} of $G$ is a circle in $|G|$ containing all vertices of $G$, and we shall call $G$ \textit{Hamiltonian} if such a circle exists.

Motivated by Theorem~\ref{thm:fin_fleisch} and Theorem~\ref{thm:fin-sekanina} as well as by their extensions to locally finite infinite graphs, Li~\cite{bi-power_li} proved Hamiltonicity results for finite as well as locally finite bipartite graphs involving the bi-power.
In order to state Li's result for finite graphs we have to introduce another notation.

A finite connected bipartite graph $G$ is called \textit{Hamilton-laceable} if for any two vertices $v, w \in V(G)$ from different bipartition classes of $G$ there exists a Hamilton-path whose endvertices are $v$ and $w$.

\begin{theorem}\cite{bi-power_li}*{Thm.\ 6}\label{thm:Li-fin}
If $G$ is a finite connected bipartite graph that admits a perfect matching, then $G^3_B$ is Hamilton-laceable.
\end{theorem}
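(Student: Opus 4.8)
The plan is to use the perfect matching $M$ to organise the vertex set into $n:=|V(G)|/2$ \emph{super-vertices}, namely the pairs $\{x_i,y_i\}$ with $x_iy_i\in M$, where $x_i$ lies in one bipartition class $X$ and $y_i$ in the other class $Y$. Contracting $M$ gives a connected graph $H:=G/M$ on these super-vertices; I fix a spanning tree $T$ of $H$ together with, for each edge of $T$, one realising edge of $G$. The basic local observation is a \emph{gadget}: if two super-vertices $\{x,y\}$ and $\{x',y'\}$ are joined by an edge of $G$ (say $xy'\in E(G)$), then these four vertices span a $4$-cycle $x-y-x'-y'-x$ in $G^3_B$, since the two matching edges and the realising edge have distance $1$ while the opposite pair $yx'$ has odd distance at most $3$, and the two same-class pairs are non-edges. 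More importantly, two super-vertices sharing a common neighbour in $H$ are linked in $G^3_B$ by at least one \emph{cross edge}, produced by a length-$3$ walk through that neighbour (using its matching edge when needed); for instance $y_c-x_r-y_r-x_d$ shows $y_cx_d\in E(G^3_B)$. These longer cross edges, supplied by the matching, are what make the construction compose over a tree.

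The heart of the argument is a recursive lemma: for every subtree of $T$ rooted at a super-vertex $\{x_r,y_r\}$, the subgraph of $G^3_B$ induced on the vertices of that subtree has a Hamilton path from $x_r$ to $y_r$. I would prove this by induction on the number of super-vertices in the subtree. In the inductive step the root has children subtrees rooted at pairs $\{x_c,y_c\}$, each (by induction) spanned by a Hamilton path between its own two root vertices, traversable in either direction. I group the children according to whether their realising $T$-edge attaches them at $x_r$ or at $y_r$. Within one group the children's root vertices chain together through the length-$3$ cross edges routed via the common parent vertex, and the two groups are stitched to one another and to the pair $x_r,y_r$ through the root's own matching edge $x_ry_r$; assembling these pieces yields the required $x_r$-to-$y_r$ path. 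The delicate point is entirely one of parity and orientation: each child must be entered and left at vertices of the correct bipartition classes so that every junction is a genuine edge of $G^3_B$, and the traversal direction of each child, together with the order of the two groups, must be chosen to make these classes agree.

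Granting the recursive lemma, the theorem follows by distinguishing whether the prescribed endpoints $v\in X$ and $w\in Y$ lie in the same super-vertex. If $\{v,w\}\in M$, I apply the lemma to all of $T$ rooted at this super-vertex and obtain a Hamilton path from $v$ to $w$ immediately. Otherwise $v$ and $w$ lie in distinct super-vertices $P_0$ and $P_m$; I take the $T$-path $P_0,P_1,\dots,P_m$ as a spine and treat each $P_i$ together with all subtrees hanging off it away from the spine as a single block. The inductive step of the recursive lemma traverses each block by a Hamilton path that enters one vertex of $P_i$ and leaves the other, and consecutive blocks are joined by the gadget cross edge of the spine edge $P_iP_{i+1}$. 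Beginning the traversal of $P_0$ at $v$ and ending that of $P_m$ at $w$, and checking that the alternation of classes along the spine is consistent (it is, because $v\in X$, $w\in Y$, and the whole path has even order $2n$), produces the desired Hamilton path of $G^3_B$.

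I expect the main obstacle to be precisely the bookkeeping flagged above: tracking, through both the recursion and the spine, which bipartition class each entry and exit vertex belongs to, so that every connecting edge is a genuine distance-$1$ or distance-$3$ edge of $G^3_B$. This is where the restriction to \emph{bi}-powers and the perfect matching are jointly essential—the matching furnishes the length-$3$ cross edges through each super-vertex, while the odd-distance convention enforces the bipartite alternation underlying the parity count. The remainder is routine: verifying the small base cases of the induction and confirming that neither the grouping of children nor the spine assembly ever calls for an edge that $G^3_B$ does not provide.
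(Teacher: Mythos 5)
Your proposal is correct and follows essentially the same route the paper takes (for the infinite analogue, which it explicitly models on Li's finite argument): your recursive lemma on subtrees of the contracted tree $G/M$ is exactly the content of Lemma~\ref{lem:key-lemma} — a spanning tree of $G$ containing $M$, expanded matching-pair by matching-pair via length-$3$ detours through matched partners — and your spine argument for endpoints in distinct matching edges is the induction on $|E(uTv)\setminus M|$ in the proof of Theorem~\ref{main}. The only difference is cosmetic: the paper grows the tree breadth-first in layers $T_i$ rather than recursing depth-first on rooted subtrees.
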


In contrast to the usual power operation and to Theorem~\ref{thm:fin_fleisch} and Theorem~\ref{thm:fin-sekanina}, the statement of Theorem~\ref{thm:Li-fin} becomes false if we omit the assumption of $G$ admitting a perfect matching as shown by Li~\cite{bi-power_li}.
We shall briefly discuss Li's example and adapt it to also yield an example for infinite graphs in Section~\ref{sec:counterexamples}.

Beside Theorem~\ref{thm:Li-fin}, Li proved a related result for locally finite infinite graphs in the same article~\cite{bi-power_li}.
However, the conclusion of the result only yields the existence of a Hamilton circle and does not speak about an adapted topological version of Hamilton-laceability.

\begin{theorem}\cite{bi-power_li}*{Thm.\ 5}\label{thm:Li-inf}
If $G$ is a locally finite infinite connected bipartite graph that admits a perfect matching, then $G^3_B$ is Hamiltonian.
\end{theorem}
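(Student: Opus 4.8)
The plan is to derive the infinite statement from its finite counterpart (Theorem~\ref{thm:Li-fin}) by an exhaustion-and-compactness argument, exactly as Georgakopoulos passed from the finite to the locally finite versions of the Fleischner and Sekanina theorems. First I would fix the perfect matching $M$ and build an increasing sequence $V_0 \subseteq V_1 \subseteq \cdots$ of finite vertex sets with union $V(G)$ such that every $G[V_n]$ is connected and every $V_n$ is $M$-saturated (contains the $M$-partner of each of its vertices). This is possible: grow a breadth-first search from a root and, after each enlargement, adjoin the matching partner of every newly included vertex; a partner is joined to its mate by an edge, so connectivity survives, while saturation makes $M\cap E(G[V_n])$ a perfect matching of the finite connected bipartite graph $G[V_n]$. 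Theorem~\ref{thm:Li-fin} then tells us that $(G[V_n])^3_B$ is Hamilton-laceable. I would record the inclusion $(G[V_n])^3_B \subseteq G^3_B[V_n]$: for $x,y\in V_n$ both graphs are bipartite with the same classes, so $\textnormal{dist}_{G[V_n]}(x,y)$ and $\textnormal{dist}_G(x,y)$ share their parity while the former dominates the latter; hence every edge created inside $V_n$ by the finite bi-power already lies in $G^3_B$, and every finite Hamilton path supplied by Theorem~\ref{thm:Li-fin} is a path of $G^3_B$.

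Next I would set up a K\"onig's Lemma argument. A Hamilton circle of $G^3_B$ is a homeomorphic image of $S^1$ in $|G^3_B|$ meeting every vertex, and it is a standard fact that it must run through every end. Its trace on a slab $G^3_B[V_n]$ is a spanning linear forest whose leaves lie on the finite boundary $\partial V_n$, and across that boundary the leaves are joined, through the (finitely many, since $G$ is locally finite) components of $G-V_n$, into one cyclic pattern. I would let level $n$ of a tree $T$ consist of all such admissible spanning linear forests of $G^3_B[V_n]$, with a level-$(n+1)$ forest placed below the level-$n$ forest to which it restricts. Each level is finite because $V_n$ is. To populate the tree I would, for each $m$, invoke the laceability in Theorem~\ref{thm:Li-fin} to obtain a Hamilton path of $(G[V_m])^3_B$ whose two prescribed endvertices lie in $\partial V_m$ and in different bipartition classes; restricting this path to each smaller $V_n$ yields admissible forests at all levels $\le m$. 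This is precisely where the strengthening from mere Hamiltonicity to Hamilton-laceability is indispensable, since it lets us steer the endpoints to the boundary where the eventual circle must leave $V_n$. As every level of $T$ is thereby nonempty and finite, K\"onig's Lemma extracts a nested branch $(F_n)_n$.

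Finally I would argue that $\bigcup_n F_n$ closes up to a Hamilton circle. It is spanning, since each vertex enters some $V_n$; it contains no finite cycle, since every finite set of its edges sits inside one forest $F_N$; and, the $F_n$ being restrictions of paths, no vertex ever has degree above two. The crux, and the step I expect to be hardest, is the topology at the ends: one must make the notion of ``admissible'' tight enough that the boundary leaves of successive levels pair up consistently, so that in the limit no vertex is stranded with degree one, the loose ends converge in pairs to ends of $G$, and all of $\overline{\bigcup_n F_n}$ fuses into a single arc-connected set homeomorphic to $S^1$ rather than into several disjoint circles or unclosed double rays. One must further check that this pairing survives the passage to a single K\"onig branch even when $G$ has infinitely many ends. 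Reconciling this rigidity with the flexibility that Theorem~\ref{thm:Li-fin} needs in order to fill every level of $T$ is, to my mind, the heart of the matter; granting it, what remains is the by-now-standard topology of the Freudenthal compactification.
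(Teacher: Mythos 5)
Your exhaustion of $G$ by $M$-saturated connected finite slabs and the inclusion $(G[V_n])^3_B \subseteq G^3_B[V_n]$ are fine, and K\"onig's Lemma does extract a nested sequence of spanning linear forests whose union is a spanning subgraph of $G^3_B$ with all degrees at most $2$ and no finite cycle. But the step you yourself flag as ``the heart of the matter'' is a genuine gap, not a technicality, and it is exactly the point where naive compactness arguments for Hamilton circles break down. What such an argument delivers, in the precise form of Theorem~\ref{thm:H-curve} of K\"undgen, Li and Thomassen, is a Hamilton \emph{curve}: a continuous image of $S^1$ containing all vertices, which may traverse an end of $G^3_B$ many times. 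To get a Hamilton \emph{circle} you must force every end to be traversed exactly once, i.e.\ control the (edge-)degree of the limit object at every end, and nothing in your tree of ``admissible'' forests does this. The Hamilton paths supplied by Theorem~\ref{thm:Li-fin} live in all of $(G[V_m])^3_B$; laceability lets you prescribe their two endvertices, but gives you no control whatsoever over how many edges of the path cross a fixed boundary $\partial V_n$ for $n<m$, and these crossing numbers are precisely what must be pinned down (to the value $2$ per end, consistently over all $n$) for the closure of the limit forest to be a single circle rather than a union of circles, unclosed double rays, or a curve that revisits an end. When $G$ has infinitely many ends there are infinitely many such constraints, and ``admissible'' would have to encode all of them; defining that notion so that the levels are still nonempty is the entire difficulty, and the proposal does not do it.

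This is why neither Li's proof nor the present paper argues by compactness on the whole of $G^3_B$. Li first fixes an end-faithful spanning tree $T$ of $G$ containing the matching $M$ and enlarges it by carefully chosen cycles to an end-faithful spanning subgraph $G'\subseteq G^3_B$ each of whose ends has degree at most $3$; inside such a sparse subgraph a Hamilton curve (obtained via Theorem~\ref{thm:H-curve}) cannot traverse an end twice and is therefore automatically a Hamilton circle of $G'$, hence of $G^3_B$. The present paper goes further in the same spirit: Lemma~\ref{lem:key-lemma} constructs the Hamilton arc directly inside $\overline{T^3_B}$, certifying arc-hood by exhibiting, for each edge $e$ of the limit edge set $A$, a finite cut of $G^3_B$ meeting $A$ only in $e$ (using Lemma~\ref{lem:end-faith_fund_cut} and Lemmas~\ref{lem:jumping-arc} and~\ref{lem:top_conn}). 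In both cases the end-degree control comes from working inside an end-faithful tree-like skeleton chosen in advance, not from the finite theorem plus compactness. A secondary, more minor issue: to populate your tree you need two boundary vertices of $V_m$ in different bipartition classes, which can fail for particular slabs (e.g.\ when $\partial V_m$ is a single vertex); this is patchable, but the end-degree problem above is not patchable within the framework you describe.
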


In this paper, we extend Theorem~\ref{thm:Li-fin} to locally finite graphs by defining and using a natural topological extension of the notion of Hamilton-laceability.
In order to define this, we have to state other definitions first.

Let $G$ be a locally finite connected graph.
As an analogue of a path, we define an \textit{arc} as a homeomorphic image of the unit interval $[0,1] \subseteq \mathbb{R}$ in $|G|$.
We call a point $p$ of $|G|$ an \textit{endpoint} of $\alpha$ if $0$ or $1$ is mapped to $p$ by the homeomorphism defining~$\alpha$.
For $p, q \in |G|$, we shall briefly call an arc $\alpha$ a \textit{$p$--$q$ arc} if $p$ and $q$ are endpoints of $\alpha$.
Furthermore, an arc $\alpha$ in $|G|$ is called a \textit{Hamilton arc} of $G$ if it contains all vertices of $G$.

Now we are able to state a topological analogue of Hamilton-laceability.
We call a locally finite connected bipartite graph $G$ \textit{Hamilton-laceable} if for any two vertices $v, w \in V(G)$ from different bipartition classes of $G$ there exists a Hamilton arc whose endpoints are $v$ and $w$, i.e. a Hamilton $v$--$w$ arc.

Now we are able to state the main result of this paper.

\begin{theorem}\label{main}
If $G$ is a locally finite connected bipartite graph that admits a perfect matching, then $G^3_B$ is Hamilton-laceable.
\end{theorem}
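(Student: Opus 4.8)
The plan is to realise the desired Hamilton arc as a topological limit, inside $|G^3_B|$, of a sequence of finite Hamilton-laceable $v$--$w$ paths supplied by Li's finite theorem, with the perfect matching playing the role of guaranteeing that the finite pieces on which we work again satisfy the hypotheses of Theorem~\ref{thm:Li-fin}. Throughout, note that $G\subseteq G^3_B$ so that $G^3_B$ is connected, and that $G^3_B$ is locally finite because only finitely many vertices lie within distance $3$ of any given vertex; hence $|G^3_B|$ is well defined. Fix a perfect matching $M$ of $G$, let $A,B$ be the bipartition classes, and let the prescribed vertices be $v\in A$ and $w\in B$. The first step is to set up an exhaustion $G_1\subseteq G_2\subseteq\cdots$ of $G$ by finite connected subgraphs with $\bigcup_n G_n=G$ and $v,w\in V(G_1)$, subject to two additional requirements. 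Each $G_n$ should be \emph{$M$-closed}, i.e.\ closed under taking $M$-partners; since $G$ is connected and locally finite this can always be arranged by alternately adding missing $M$-partners and reconnecting the result, and then $M\cap E(G_n)$ is a perfect matching of $G_n$, so that Theorem~\ref{thm:Li-fin} applies to $G_n$. Moreover I would choose the $G_n$ so that every component of $G-V(G_n)$ is infinite, so that no finite chunk of $G$ is stranded outside the exhaustion and the end structure is cleanly separated by the finite boundaries $\partial_n := \{x\in V(G_n) : x \text{ has a neighbour in } G-V(G_n)\}$.

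The finite input is then immediate: for each $n$ the graph $(G_n)^3_B$, which is a subgraph of $G^3_B$, is Hamilton-laceable by Theorem~\ref{thm:Li-fin}, so it contains a Hamilton $v$--$w$ path $P_n$. The real work is to choose the $P_n$ \emph{compatibly}, so that the sequence $\bigl(E(P_n)\bigr)_n$, viewed as elements of the compact space $\{0,1\}^{E(G^3_B)}$, has a convergent subsequence whose limit set $F$ closes up to an arc. The crucial quantity to control is, for each $n$, the number and the placement of the edges of $P_m$ ($m\ge n$) crossing the finite cut between $V(G_n)$ and $G-V(G_n)$: these crossing numbers must stabilise and respect the correct parity so that, in the limit, the path "escapes to infinity and returns" through each end exactly as an arc does. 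To gain the freedom needed to prescribe where the finite paths meet each boundary component, I would apply Li's theorem not to $G_n$ bare but to finite $M$-closed graphs interpolating between $G_n$ and $G_{n+1}$, exploiting the flexibility of third bi-powers already visible in the finite statement (a connected graph with a perfect matching has a spanning tree containing that matching, and bi-powers of such trees admit laceable paths with a great deal of freedom in their endpoints).

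With the $P_n$ so chosen, the second-to-last step is the limit itself: pass to a subsequence along which $E(P_n)\to F$ pointwise, set $\overline F$ to be the closure of $F$ in $|G^3_B|$, and verify that $\overline F$ is a Hamilton $v$--$w$ arc. Here one checks the degree conditions — $v$ and $w$ have $F$-degree $1$ while every other vertex has $F$-degree $2$, which is inherited in the limit from the fact that each $P_n$ is a $v$--$w$ path — together with connectedness of $\overline F$ and the correct local structure of $\overline F$ at every end, using the standard criteria of Diestel and K\"uhn for when the closure of an edge set is an arc or circle in the Freudenthal compactification.

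The main obstacle is precisely this last verification: ensuring that the limit is a genuine topological arc, homeomorphic to $[0,1]$, rather than merely a spanning connected subset or a non-injective curve. Concretely, one must rule out that $\overline F$ breaks into a $v$--$w$ arc together with stray finite cycles or double rays, and one must show that $\overline F$ passes through each end of $G^3_B$ correctly (touching it once, or crossing it, with $v,w$ as the only genuine endpoints). This is exactly where the controlled cut-crossing numbers and the infiniteness of the components of $G-V(G_n)$ are used. A second, more subtle point requiring care is that $G^3_B$ may \emph{merge} ends of $G$, so the whole argument must be carried out with respect to the ends of $G^3_B$ and not those of $G$; in particular this is the reason one cannot simply pass to a spanning tree of $G$ containing $M$ and transfer a Hamilton arc, since such a tree can have strictly more ends than $G^3_B$ and the induced map on compactifications would fail to be injective on the arc. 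Finally, the necessity of the perfect-matching hypothesis — without which the statement fails, as recorded in the example adapted to the infinite setting in Section~\ref{sec:counterexamples} — is reflected here in the fact that it is exactly what keeps the finite approximations $M$-closed and thus within the reach of Theorem~\ref{thm:Li-fin}.
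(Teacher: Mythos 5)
Your plan correctly identifies where the difficulty lies, but it does not overcome it: the step ``choose the $P_n$ compatibly so that the crossing numbers of each finite cut stabilise and the limit closes up to an arc'' is exactly the content of the theorem, and nothing in your proposal supplies it. Theorem~\ref{thm:Li-fin} only guarantees \emph{some} Hamilton $v$--$w$ path in $(G_n)^3_B$; it gives no control whatsoever over how often, or through which vertices, that path crosses the finite cut $E(V(G_n), V(G)\setminus V(G_n))$ when applied to a larger $G_m$. Without such control, a pointwise subsequential limit $F$ of the edge sets $E(P_n)$ can easily have every interior vertex of degree $2$ and a topologically connected closure and still fail to be an arc, because $\overline F$ may traverse an end of $G^3_B$ with end-degree $4$ or more, or may degenerate into a $v$--$w$ arc plus stray double rays whose union only becomes connected through the ends. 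Your suggested remedy --- applying Li's theorem to interpolating $M$-closed graphs and ``exploiting the flexibility of third bi-powers'' --- is precisely the missing argument, not a reduction of it. This is the standard failure mode of naive compactness arguments for Hamilton circles and arcs, and it is why the paper does not take this route.

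The paper's actual proof sidesteps the compatibility problem by never invoking the finite theorem as a black box. It first extends $M$ to an \emph{end-faithful} spanning tree $T$ of $G$ (Lemma~\ref{lem:matching_in_tree}, via a normal spanning tree of $G/M$), and the key lemma (Lemma~\ref{lem:key-lemma}) builds a Hamilton $x$--$y$ arc for each $xy\in M$ directly inside $\overline{T^3_B}$ by growing finite Hamilton paths $A_i$ layer by layer, where each step only replaces matching edges on the current frontier by local detours; consequently the edge set stabilises and every relevant finite cut is met exactly once, which via Lemmas~\ref{lem:top_conn} and~\ref{lem:jumping-arc} forces the closure to be an arc. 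The general case then follows by induction on $|E(uTv)\setminus M|$, splitting $T$ at a non-matching edge of $uTv$ and joining the two arcs obtained from the induction hypothesis. Note also that your stated reason for rejecting the tree-based approach --- that a spanning tree containing $M$ ``can have strictly more ends than $G^3_B$'' --- does not apply here: an end-faithful such tree exists, and the paper shows that end-faithfulness in $G$ transfers to $G^3_B$ (a finite separator $S$ in $G$ yields the finite separator $S\cup N(S)$ in $G^3_B$), so the ends are not merged and all fundamental cuts of $T$ in $G^3_B$ are finite. In short, the approach you dismiss is the one that works, and the approach you propose has its central step unproved.
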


Clearly, as for finite graphs, our topological notion of Hamilton-laceability is stronger than Hamiltonicity since demanding the existence of a Hamilton arc for the two endvertices of an edge in the considered graph immediately yields a Hamilton circle (unless $G \neq K_2 $).
Hence, Theorem~\ref{main} is a proper extension of Theorem~\ref{thm:Li-inf}.

The structure of this paper is as follows.
In Section~\ref{sec:prelim} we introduce the necessary notation, definitions and tools for the rest of the paper.
In Section~\ref{sec:counterexamples} we briefly discuss via finite and locally finite counterexamples how some potential strengthenings of the statement of Theorem~\ref{main} fail.
Section~\ref{sec:main-proof} starts with a brief discussion of the differences of our proof method compared to the one used by Li~\cite{bi-power_li}.
Afterwards, we prove a key lemma, Lemma~\ref{lem:key-lemma}, which then enables us to prove the main result, Theorem~\ref{main}.

\section{Preliminaries}\label{sec:prelim}

All graphs in this paper are simple and undirected ones.
Generally, we follow the graph theoretical notation from \cite{diestel_buch}.
Regarding topological notions for locally finite graphs, we especially refer to~{\cite{diestel_buch}*{Ch.\ 8.5}}, and for a wider survey about topological infinite graph theory we refer to~\cite{diestel_arx}. 

Throughout this section let $G$ denote an arbitrary, hence also potentially infinite, graph.

\subsection{Basic notions and tools}\label{subsec:basic}

For any positive integer $k$ let $[k] := \{ 1, \ldots, k \}$.

Let $X$ be a vertex set of $G$.
We denote by $G[X]$ the induced subgraph of $G$ with vertex set $X$ and write $G-X$ for the graph $G[V(G) \setminus X]$.
If $H$ is a subgraph of $G$ we shall write $G-H$ instead of $G-V(H)$.
For an edge set $E \subseteq E(G)$ we denote by $G-E$ the subgraph of $G$ with vertex set $V(G-E) := V(G)$ and edge set $E(G-E) := E(G) \setminus E$.
To ease notation in case $E$ is a singleton set, i.e. $E = \{ e \}$ for some edge $e \in E(G)$, we shall write $G - e$ instead of $G - \{e \}$.

If $T$ is a spanning tree of $G$ and $e=xy \in E(T)$, let us denote by $T_x$ and $T_y$ the two components of $T-e$ containing $x$ or $y$, respectively.
Now $D_e := E(V(T_x), V(T_y)) \subseteq E(G)$ defines a cut of $G$ and we denote it as the \textit{fundamental cut of $e$ w.r.t.~$T$} in $G$.
Also we call a cut of $G$ a \textit{fundamental cut w.r.t.~$T$} if it is a fundamental cut of some edge $e \in E(T)$ w.r.t.~$T$ in $G$.

A path $P$ is called an \textit{$X$--path} if its endvertices lie in $X$, but the set of interior vertices of $P$ is disjoint from $X$.
Similarly, for a subgraph $H \subseteq G$ we call a path a \textit{$H$--path} if it is a $V(H)$--path.
Given two vertex sets $A, B \subseteq V(G)$, we call a path $Q$ in $G$ an $A$--$B$ path if $Q$ is an $a$--$b$ path for some $a \in A$ and some $b \in B$ whose set of interior vertices is disjoint from $A \cup B$.
As before, given two subgraphs $H_1, H_2$ of $G$, we shall call a path a $H_1$--$H_2$ path if it is a $V(H_1)$--$V(H_2)$ path.
If $u$ and $v$ are vertices of a (potentially infinite) tree $T$, then we write $uTv$ to denote the unique $u$--$v$ path in $T$.

We call a one-way infinite path $R$ in $G$ a \textit{ray} of $G$, and a subgraph of $R$ that is itself a ray a \emph{tail} of $R$.
We define an equivalence relation on the set of all rays of $G$ by calling two rays in $G$ \emph{equivalent} if they cannot be separated in $G$ via any finite vertex set of $G$.
It is straightforward to check that this actually defines an equivalence relation.
For two rays $R_1, R_2$ in $G$ we shall write $R_1 \sim_G R_2$ to denote that $R_1$ and $R_2$ are equivalent in $G$.
We shall drop the subscript in case it is clear in which surrounding graph we are arguing.
Note that the statement $R_1 \sim_G R_2$ is equivalent to saying that there exist infinitely many pairwise disjoint $R_1$--$R_2$ paths in $G$.
We call the corresponding equivalence classes of rays under this relation the \textit{ends} of $G$.

A subgraph $H$ of $G$ is called \textit{end-faithful} if the following two properties hold:
\begin{enumerate}
\item [(i)] every end of $G$ contains a ray of $H$;
\item [(ii)] any two rays of $H$ belong to a common end of $H$ if and only if they belong to a common end of $G$.
\end{enumerate}

A (possibly infinite) rooted tree $T$ within a graph $G$ is called \textit{normal} if the endvertices of every $T$--path of $G$ are comparable in the tree-order of $T$.
Note that in the case of $T$ being a spanning tree, every $T$-path is just an edge.

The following theorem is due to Jung.
Since the reference~\cite{Jung_countable} is a paper written in German, we include another textbook reference for the proof of the theorem.

\begin{theorem}\cites{Jung_countable, diestel_buch}\label{thm:jung_NST}
Every countable connected graph has a normal spanning tree.
\end{theorem}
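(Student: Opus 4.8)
The plan is to prove Jung's theorem by an explicit construction that avoids transfinite machinery, building the normal spanning tree as a nested union of \emph{finite} normal trees. Since $G$ is countable I fix an enumeration $v_0, v_1, v_2, \ldots$ of $V(G)$ and take $v_0$ as the common root of every tree I construct. The whole argument rests on a single structural observation about normal trees, an extension step derived from it, and a routine limit.

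First I would isolate the structural property. For a normal tree $T$ in $G$ and any component $C$ of $G - T$, let $N(C)$ denote the set of vertices of $T$ adjacent to $C$; I claim $N(C)$ is a chain in the tree-order of $T$. Indeed, if two vertices $x, y \in N(C)$ were incomparable, then, picking neighbours of $x$ and $y$ inside $C$ and joining them through the connected graph $C$, one obtains an $x$--$y$ path whose interior lies in $C$ and is therefore disjoint from $V(T)$; this is a $T$-path, so by normality of $T$ its endvertices $x$ and $y$ would have to be comparable, a contradiction.

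With this in hand, the extension step reads as follows: if $T$ is a \emph{finite} normal tree in $G$ and $v \in V(G) \setminus V(T)$, then there is a finite normal tree $T' \supseteq T$ (with the same root) such that $v \in V(T')$. To prove it, let $C$ be the component of $G - T$ containing $v$. By the claim $N(C)$ is a chain, and since $T$ is finite this chain has a top element $t$. As $t$ has a neighbour in $C$ and $C$ is connected and contains $v$, there is a $t$--$v$ path $Q$ in $G[V(C) \cup \{t\}]$; let $T'$ be $T$ together with $Q$ appended below $t$ as a descending branch. Normality of $T'$ is checked edge by edge: an edge with both endvertices in $V(T)$ is handled by normality of $T$; an edge with both endvertices on $Q$ lies on a single branch; and an edge joining an interior vertex $q$ of $Q$ (which lies in $C$) to some $w \in V(T)$ forces $w \in N(C)$, whence $w \le t$ in the tree-order, while $q$ is a descendant of $t$, so $w$ and $q$ are comparable.

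Finally I assemble the tree. Set $T_0 := (\{v_0\}, \emptyset)$; given the finite normal tree $T_n$, let $v_j$ be the vertex of least index not in $V(T_n)$ and apply the extension step to obtain a finite normal tree $T_{n+1} \supseteq T_n$ with $v_j \in V(T_{n+1})$. Put $T := \bigcup_{n} T_n$. This is a tree, being a nested union of trees sharing the root $v_0$, and it is spanning because the least missing index strictly increases from one stage to the next and therefore tends to infinity, so every $v_j$ is eventually covered. It is normal because, by the spanning property, both endvertices of any given edge of $G$ lie in some $T_n$, where they are comparable, and the tree-orders of the $T_n$ are mutually compatible and agree with that of $T$; since for a spanning tree every $T$-path is a single edge, this is exactly normality of $T$. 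The step I expect to be the main obstacle is the extension step: the crucial point is to notice that it is precisely the finiteness of each $T_n$ that guarantees the chain $N(C)$ has a maximum to attach below, which is in turn exactly what the normality check on the enlarged tree demands; once this is secured, the passage to the countable limit is entirely routine.
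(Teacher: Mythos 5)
The paper itself does not prove this theorem --- it cites Jung and the textbook of Diestel --- and your construction is precisely the standard proof from that reference: enumerate the vertices of the countable graph, grow a nested sequence of \emph{finite} normal trees by attaching, for the component $C$ of $G-T_n$ containing the next missing vertex, a path below the maximum $t$ of the chain $N(C)$, and take the union. Your chain claim is proved correctly, the observation that finiteness of $T_n$ is what guarantees the existence of the top element $t$ is exactly the right point, and the limit argument (compatibility of the tree-orders, spanning via the strictly increasing least missing index, and the reduction of normality of a spanning tree to the edge condition) is sound.

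There is, however, one genuine gap in the extension step: you verify normality of $T'$ only \emph{edge by edge}, but $T'$ is not a spanning tree, so a $T'$-path need not be a single edge --- the paper's own definition emphasises that the reduction of $T$-paths to edges is valid only in the spanning case, and indeed your chain claim, which is the hypothesis your next iteration step relies on, is itself proved using a $T$-path of arbitrary length through $C$. As written the induction does not close: you assume full $T$-path normality of $T_n$ but establish only the weaker edge condition for $T_{n+1}$, and the edge condition genuinely does not imply normality (two incomparable vertices of a tree may be joined by a path whose interior avoids the tree even when no edge joins them). The repair is short and uses your own case analysis. Let $P$ be a $T'$-path with interior in a component $D$ of $G-T'$. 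If $D$ is a component of $G-T$ other than $C$, then no vertex of $Q$ below $t$ has a neighbour in $D$ (they lie in $C$), so both endvertices of $P$ lie in $V(T)$ and $P$ is a $T$-path; normality of $T$ applies. If $D \subseteq C$, then every endvertex of $P$ lying in $V(T)$ is a neighbour of $C$, hence lies in $N(C)$ and satisfies $u \le t$, while every endvertex on $Q$ is a descendant of $t$; since $N(C)$ is a chain with maximum $t$ and $Q$ is a single branch through $t$, the set $N(C) \cup V(Q)$ is a chain in $T'$, so the two endvertices of $P$ are comparable. Substituting this for the edge-by-edge check makes your proof complete.
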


The following lemma has also been proved by Jung~\cite{Jung_countable} written in German.
As before we include an additional textbook reference for the proof.

\begin{lemma}\cites{Jung_countable, diestel_buch}\label{lem:normal_end-faithful}
Every normal spanning tree of a graph $G$ is an end-faithful subgraph of $G$.
\end{lemma}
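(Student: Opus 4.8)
The plan is to exhibit, for every end of $G$, a canonical ray of $T$ lying in it, and then to show this correspondence respects the end structure of both graphs. The whole argument rests on a single structural consequence of normality, which I would establish first. Fix a root $r$ of $T$ and let $\le$ be the associated tree-order. For a vertex $t$ write $T_t$ for the subtree of descendants of $t$ (those $v$ with $t\le v$) and let $\lceil t\rceil$ denote the finite set of vertices on the path $rTt$. The key claim is that $\lceil t\rceil\setminus\{t\}$ separates $T_t$ from the rest of $G$: since $T$ is normal, the two endvertices of every edge of $G$ are comparable, so any $G$-edge leaving $T_t$ joins some $v\ge t$ to a comparable vertex outside $T_t$, and such a vertex is necessarily a proper ancestor of $t$. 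In particular the subtrees hanging at the distinct children of any vertex $s$ are pairwise separated in $G$ by the finite set $\lceil s\rceil$.

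For property~(i), given an end $\omega$ I would build a normal ray $R_\omega = (r = t_0 < t_1 < t_2 < \cdots)$ by descending the branch that $\omega$ selects. Having reached $t_n$ with all rays of $\omega$ having a tail in the (necessarily infinite) subtree $T_{t_n}$, remove the finite set $\lceil t_n\rceil$: by the separation claim the components of $G-\lceil t_n\rceil$ that lie inside $T_{t_n}$ are exactly the subtrees hanging at the children of $t_n$, so the rays of $\omega$ have their tails in precisely one of them, and I let $t_{n+1}$ be the corresponding child. This yields a normal ray, and I would verify $R_\omega\in\omega$ thus: given any finite $S$, the height of $t_m$ equals $m$, so $S\cap T_{t_m}=\emptyset$ for all large $m$; then $T_{t_m}$ is a connected subgraph of $G-S$ containing a tail of $R_\omega$ and a tail of every ray of $\omega$, whence no finite set separates $R_\omega$ from the rays of $\omega$. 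So $R_\omega$ is a ray of $T$ lying in $\omega$, giving~(i).

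For property~(ii) I would first record that distinct normal rays $N\neq N'$ branch apart at their last common vertex $t_k$, after which their tails lie in subtrees hanging at distinct children of $t_k$; by the separation claim these tails are separated by the finite set $\lceil t_k\rceil$ in $T$ and in $G$ alike, so $N$ and $N'$ lie in distinct ends of $T$ and in distinct ends of $G$. Applying the construction of~(i) to the tree $T$ itself shows that every ray of $T$ is $\sim_T$-equivalent to a (unique) normal ray; write $N_R$ for the normal ray associated with a ray $R$ of $T$. Since equivalence inside the subgraph $T$ trivially implies equivalence in $G$, we have both $R\sim_T N_R$ and $R\sim_G N_R$. Now for rays $R_1,R_2$ of $T$ the two chains of equivalences combine into
\[
R_1\sim_T R_2 \iff N_{R_1}=N_{R_2} \iff R_1\sim_G R_2,
\]
where the outer equivalences use that distinct normal rays lie in distinct ends of $T$, respectively of $G$. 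This is exactly condition~(ii), completing the proof.

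The step I expect to be the main obstacle is the verification that the constructed ray $R_\omega$ genuinely lies in $\omega$ rather than merely tracking it: this is precisely where the finiteness of the separators $\lceil t_n\rceil$ and the strictly increasing heights of the $t_n$ are essential, and it is the point at which normality, as opposed to an arbitrary spanning tree, is actually used.
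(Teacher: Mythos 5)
Your proposal is correct: the separation property of the down-closures $\lceil t\rceil$ is exactly the right consequence of normality, and both the construction of a normal ray in each end of $G$ and the deduction of condition (ii) via the canonical normal rays $N_R$ go through as written. The paper gives no proof of its own but defers to Jung and to Diestel's textbook, and your argument is essentially the standard one found there, so there is nothing further to compare.
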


The following lemma is a basic tool in infinite combinatorics and well-known under the name Star-Comb Lemma.
In order to formulate it we need to state another definition first.

We define a \textit{comb} as the union of a ray $R$ with infinitely many disjoint finite paths each having precisely its first vertex on $R$.
The ray $R$ is called the \textit{spine} of the comb and the last vertices of the paths are called the \textit{teeth} of the comb.

\begin{lemma}\cite{diestel_buch}*{Lemma~8.2.2}\label{lem:star-comb}
Let $U$ be an infinite set of vertices in a connected graph $G$.
Then $G$ contains either a comb with all teeth in $U$ or a subdivision of an infinite star with all leaves in $U$.
\end{lemma}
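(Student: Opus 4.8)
The plan is to reduce to the case of a tree and then run a simple dichotomy according to which ``direction'' carries infinitely many vertices of $U$. First I would fix a spanning tree $T$ of $G$, which exists since $G$ is connected, and root it at an arbitrary vertex $r$; since every comb and every subdivided infinite star living in $T$ is in particular one in $G$, it suffices to produce the desired configuration inside $T$. For a vertex $v$ let $T_v$ denote the subtree of $T$ consisting of $v$ together with all its descendants, and call $v$ \emph{heavy} if $V(T_v)$ contains infinitely many vertices of $U$. Note that $r$ is heavy because $V(T_r)=V(T)\supseteq U$, and that the parent of any heavy vertex is again heavy, so the heavy vertices form a subtree of $T$ containing $r$.

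The key dichotomy is whether some heavy vertex fails to have a heavy child. If a heavy vertex $v$ has no heavy child, then every child $w$ of $v$ satisfies $|U\cap V(T_w)|<\infty$; since $U\cap V(T_v)$ is infinite and meets $\{v\}$ in at most one vertex, infinitely many children $w$ must satisfy $U\cap V(T_w)\neq\emptyset$. Choosing one vertex of $U$ in each such $T_w$ and taking the $T$-path from $v$ to it yields infinitely many paths that pairwise meet only in $v$, as they enter pairwise disjoint subtrees, i.e.\ a subdivision of an infinite star centred at $v$ with all leaves in $U$.

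In the remaining case every heavy vertex has at least one heavy child, so starting at $r$ and repeatedly passing to a heavy child produces a ray $R=v_0v_1v_2\cdots$ of heavy vertices, which I would take as the spine. It then remains to attach infinitely many pairwise disjoint teeth ending in $U$. I would first argue that infinitely many indices $i$ have the property that $v_i\in U$ or some vertex of $U$ lies in a subtree $T_w$ for a child $w\neq v_{i+1}$ of $v_i$. Otherwise there is an index $N$ beyond which no $v_i$ lies in $U$ and no vertex of $U$ hangs off any child of $v_i$ other than $v_{i+1}$; then every $u\in U\cap V(T_{v_N})$ would lie in $T_{v_j}$ for all $j\geq N$ and hence at infinite depth in $T$, contradicting that $U$ is infinite. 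For each of the infinitely many admissible indices I pick a tooth: the trivial path $\{v_i\}$ when $v_i\in U$, and otherwise the $T$-path from $v_i$ to a chosen vertex of $U$ hanging off $v_i$. These paths meet $R$ exactly in their first vertex $v_i$ and are pairwise disjoint, since their non-spine parts lie in subtrees hanging off distinct vertices of $R$; together with $R$ they form the required comb.

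The main obstacle I expect is not the dichotomy itself but guaranteeing disjointness and not ``losing'' the vertices of $U$ down the chosen ray: one must verify that the greedy choice of heavy children cannot divert all but finitely many vertices of $U$ away from every tooth, which is precisely the depth argument above, and one must confirm that permitting degenerate single-vertex tooth-paths on the spine is consistent with the definition of a comb used here.
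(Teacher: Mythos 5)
Your proof is correct, but note that the paper does not prove this lemma at all: it is quoted as Lemma~8.2.2 from Diestel's textbook, so the relevant comparison is with the standard textbook argument. That argument constructs an increasing sequence of finite subtrees of $G$ whose leaves lie in $U$, takes the union $T'$, and then splits on whether $T'$ has a vertex of infinite degree (yielding the subdivided star) or is locally finite and infinite (yielding a ray by K\H{o}nig's infinity lemma, which one then completes to a comb). Your route is genuinely different: you fix a rooted spanning tree of all of $G$ and run the ``heavy vertex'' dichotomy --- a heavy vertex with no heavy child gives the star, and otherwise a ray of heavy vertices gives the spine. What your approach buys is a very transparent dichotomy and a precise location for the star's centre; the cost is that you invoke a spanning tree of a possibly uncountable graph (Zorn's lemma), whereas the textbook construction only ever builds a countable subtree directly adapted to $U$. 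Two small points worth tightening. First, in the depth argument the phrase ``$u$ would lie in $T_{v_j}$ for all $j\geq N$ and hence at infinite depth'' is loose; cleaner is to observe that $V(T_{v_N})$ decomposes exactly into the ray vertices $\{v_j : j\geq N\}$ and the side subtrees hanging off them, so under the negated assumption $U\cap V(T_{v_N})$ would be finite, contradicting heaviness of $v_N$. Second, your worry about degenerate single-vertex teeth is resolved in favour of allowing them: Diestel's definition explicitly permits trivial paths (tooth equal to a spine vertex), and indeed the lemma would be false otherwise --- take $G$ itself a ray and $U=V(G)$, where neither a subdivided infinite star nor a comb with nontrivial teeth exists. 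The paper's phrasing, with a trivial path read as a finite path whose first (and only) vertex lies on $R$, is consistent with this.
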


The following lemma is an easy consequence of the Star-Comb Lemma and should be known.
We include the proof for the sake of completeness.

\begin{lemma}\label{lem:end-faith_fund_cut}
Let $G$ be a locally finite connected graph and let $T$ be an end-faithful spanning tree of $G$.
Then every fundamental cut w.r.t.~$T$ in $G$ is finite.
\end{lemma}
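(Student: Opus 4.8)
The plan is to argue by contradiction. Suppose some fundamental cut $D_e$ with $e = xy \in E(T)$ is infinite, where $T_x, T_y$ denote the two components of $T - e$. Every edge of $D_e$ has exactly one endvertex in $V(T_x)$ and one in $V(T_y)$, and local finiteness bounds the number of $D_e$-edges incident with any single vertex; hence the set $U_x \subseteq V(T_x)$ of $T_x$-endvertices of edges in $D_e$ must be infinite, and symmetrically so is the set $U_y \subseteq V(T_y)$ of $T_y$-endvertices.

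First I would produce a ray on the $T_x$-side. Applying the Star-Comb Lemma (Lemma~\ref{lem:star-comb}) to the infinite set $U_x$ inside the connected graph $T_x$ yields either a comb with all teeth in $U_x$ or a subdivision of an infinite star with all leaves in $U_x$. The star case would force the centre of the star to have infinite degree in $T_x \subseteq G$, contradicting local finiteness; hence we obtain a comb whose spine is a ray $R_x$ of $T$ and whose teeth $t_1, t_2, \ldots$ lie in $U_x$. Each tooth $t_i$ is incident with an edge $f_i = t_i s_i \in D_e$ with $s_i \in V(T_y)$. If some vertex of $T_y$ occurred as $s_i$ for infinitely many $i$, it would have infinite degree in $G$; so after discarding repetitions we may assume the $s_i$ to be pairwise distinct, giving an infinite set $\{ s_i \} \subseteq V(T_y)$. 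Running the same Star-Comb argument inside $T_y$ on this set produces a comb $C_y$ with spine a ray $R_y$ of $T$ and teeth among the $s_i$; write $C_x$ for the first comb.

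Next I would show $R_x \sim_G R_y$ while $R_x \not\sim_T R_y$, which contradicts end-faithfulness. For the first relation, for each tooth $s_{i_j}$ of $C_y$ I concatenate the $C_y$-tooth-path from $R_y$ to $s_{i_j}$, the edge $f_{i_j}$, and the $C_x$-tooth-path from $t_{i_j}$ to $R_x$. Since $C_x$ and $C_y$ live in the vertex-disjoint subtrees $T_x$ and $T_y$, and the tooth-paths of a single comb are pairwise disjoint and meet the spine only in their first vertex, these concatenations yield infinitely many pairwise disjoint $R_x$--$R_y$ paths, so $R_x$ and $R_y$ lie in a common end of $G$. On the other hand, deleting the single vertex $x$ from $T$ leaves $T_y$ (hence a tail of $R_y$) and a tail of $R_x$ in different components, so $\{ x \}$ separates them in $T$ and $R_x \not\sim_T R_y$. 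But $R_x$ and $R_y$ are rays of the end-faithful spanning tree $T$, so by property~(ii) of end-faithfulness they share an end of $T$ if and only if they share an end of $G$; the two established facts contradict this equivalence, so no fundamental cut can be infinite.

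I expect the main obstacle to be the disjointness bookkeeping for the $R_x$--$R_y$ paths: one must ensure that the connecting edges $f_{i_j}$ meet distinct teeth on both sides and that the tooth-paths of the two combs do not interfere. This is exactly where passing to pairwise distinct $s_i$ (ruled in by local finiteness) is essential, since otherwise the connecting edges could pile up at a single vertex and fail to furnish disjoint paths.
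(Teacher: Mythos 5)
Your argument is correct and follows essentially the same route as the paper: assume a fundamental cut $D_e$ is infinite, apply the Star-Comb Lemma on each side of $T-e$ (local finiteness excluding the star outcome), link the two resulting spines by infinitely many disjoint paths through edges of $D_e$ to get equivalence in $G$, and contradict end-faithfulness since the spines are separated in $T$. Your extra bookkeeping—running the second Star-Comb application on the distinct $T_y$-endvertices of the edges at the teeth of the first comb—is a slightly more careful way of securing the disjoint connecting paths than the paper's one-line assertion, but it is the same proof.
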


\begin{proof}
Let $e =xy \in E(T)$ and $D_e \subseteq E(G)$ be the fundamental cut of $e$ w.r.t.~$T$ in $G$.
Suppose for a contradiction that $D_e$ is infinite.
Now we apply Lemma~\ref{lem:star-comb} to the vertex set $X = \left(\bigcup D_e \right) \cap V(T_x)$ in $T_x$ and to the vertex set $Y = \left(\bigcup D_e \right) \cap V(T_y)$ in $T_y$.
Since $G$ is locally finite, the application of Lemma~\ref{lem:star-comb} yields combs $C_x$ and $C_y$ in $T_x$ and $T_y$, respectively, where the teeth of $C_x$ lie in $X$ and the teeth of $C_y$ lie in $Y$.
Let $S_x$ and $S_y$ denote the spines of $C_x$ and $C_y$, respectively.
Now within the graph $C_x \cup C_y \cup G[\bigcup D_e] \subseteq G$ there exist infinitely many disjoint $S_x$--$S_y$ paths, witnessing  that $S_x \sim_G S_y$.
As $S_x$ and $S_y$ are contained in $T$ but $S_x \nsim_{T} S_y$, we have derived a contradiction to $T$ being an end-faithful spanning tree of $G$.
\end{proof}

\subsection{Topological notions and tools}\label{subsec:top}

For this subsection we assume $G$ to be a locally finite connected graph.
We can endow the $1$-skeleton of $G$ together with its ends with a certain topology, yielding the space $|G|$ referred to as \emph{Freudenthal compactification} of $G$.
For a precise definition of~$|G|$, see \cite{diestel_buch}*{Ch.\ 8.5}.
Furthermore, we refer to~\cite{Freud} for Freudenthal's paper about the Freudenthal compactification, and to~\cite{Freud-Equi} regarding the connection to $|G|$.
Note that the definition of $|G|$ ensures that each edge of $G$ corresponds to an individual copy of the real unit interval $[0, 1]$ within $|G|$ and for adjacent edges of~$G$, appropriate endpoints of the corresponding unit intervals are identified.

We denote the \textit{closure} of a point set $X \subseteq |G|$ in $|G|$ by $\overline{X}$.
A subspace $S$ of $|G|$ is called a \textit{standard subspace} if $S = \overline{F}$ for some edge set $F \subseteq E(G)$.

The next lemma yields an important combinatorial property of arcs.
In order to state the lemma, let $\mathring{F}$ denote the set of inner points of edges $e \in F$ in $|G|$ for an edge set $F \subseteq E(G)$.

\begin{lemma}{\cite{diestel_buch}*{Lemma 8.5.3}}\label{lem:jumping-arc}
Let $G$ be a locally finite connected graph and ${F \subseteq E(G)}$ be a cut with sides $V_1$ and $V_2$.
If $F$ is finite, then $\overline{V_1} \cap \overline{V_2} = \emptyset$, and there is no arc in $|G| \setminus \mathring{F}$ with one endpoint in $V_1$ and the other in $V_2$.
\end{lemma}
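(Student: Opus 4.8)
The plan is to reduce both assertions to a single combinatorial fact: since $F$ is finite, every end of $G$ lives on exactly one of the two sides. Write $S := \bigcup F$ for the (finite) set of endvertices of the cut edges and $F_i := E(G[V_i])$ for $i \in \{1,2\}$, so that $E(G) = F_1 \dcup F_2 \dcup F$. Every edge joining $V_1$ to $V_2$ lies in $F$ and therefore has both endvertices in $S$; hence each component of $G - S$ is contained entirely in $V_1$ or entirely in $V_2$. Since a ray meets the finite set $S$ only finitely often, every end $\omega$ of $G$ has a tail inside a single component $C(S,\omega)$ of $G - S$, and this component lies on one fixed side. I will call $\omega$ a $V_1$-end or a $V_2$-end accordingly and write $\Omega_1, \Omega_2$ for these two classes of ends.

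For the first assertion I would treat the three kinds of points of $|G|$ separately. Vertices and inner points of edges are easy: by local finiteness each vertex $v$ has arbitrarily small star-neighbourhoods meeting no other vertex, and each inner edge point has small neighbourhoods meeting no vertex at all, so such a point lies in at most one of $\overline{V_1}, \overline{V_2}$. The substantive case is an end $\omega$, say a $V_1$-end. Recalling that the sets $\hat{C}(S,\omega)$ -- the component $C(S,\omega)$ together with the ends living in it and the initial inner segments of the $S$--$C(S,\omega)$ edges -- form a neighbourhood basis of $\omega$, I observe that $\hat{C}(S,\omega)$ contains no vertex of $V_2$, because $C(S,\omega) \subseteq V_1$. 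Thus $\omega \notin \overline{V_2}$, and symmetrically every $V_2$-end avoids $\overline{V_1}$, giving $\overline{V_1} \cap \overline{V_2} = \emptyset$.

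For the second assertion, set $Y := |G| \setminus \mathring{F}$ and decompose it as $Y = Z_1 \dcup Z_2$ with $Z_i := V_i \cup \Omega_i \cup \mathring{F_i}$, where $\mathring{F_i}$ denotes the inner points of the edges in $F_i$. This is a genuine partition: every vertex and, by the first assertion, every end lies in exactly one $Z_i$, while the edge-interiors surviving in $Y$ are precisely those of $F_1 \cup F_2$, sorted by the side on which the edge lies. The crux is to verify that $Z_1$, and symmetrically $Z_2$, is open in $Y$. Around an inner point of an $F_1$-edge this is immediate; around a vertex $v \in V_1$ a small star-neighbourhood intersected with $Y$ meets only $v$ and initial segments of $F_1$-edges, since the incident cut edges have lost their interiors; and around a $V_1$-end $\omega$ the set $\hat{C}(S,\omega) \cap Y$ lies in $Z_1$, the segments of those $S$--$C(S,\omega)$ edges that happen to be cut edges contributing nothing once their interiors are deleted. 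Once $Z_1, Z_2$ are known to be a clopen partition of $Y$, any arc in $Y$ with one endpoint in $V_1 \subseteq Z_1$ and the other in $V_2 \subseteq Z_2$ would split its connected image into two nonempty relatively clopen pieces, which is impossible. The step I expect to be most delicate is exactly this last bookkeeping at the ends: ensuring that, after the cut-edge interiors are removed, the standard neighbourhood of a $V_1$-end cannot leak across into $Z_2$ along the half-open edge segments emanating from $S$ -- which is precisely where the finiteness of $F$, through the finite separator $S$, is used.
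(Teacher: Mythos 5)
The paper does not prove this statement itself; it is quoted verbatim as Lemma~8.5.3 of Diestel's textbook, so there is no in-paper argument to compare against. Your proof is correct and is essentially the standard one: the finiteness of $F$ makes $S=\bigcup F$ finite, every end gets a well-defined side via the component of $G-S$ containing its tails, and the resulting clopen partition $Z_1\dcup Z_2$ of $|G|\setminus\mathring{F}$ simultaneously yields $\overline{V_1}\cap\overline{V_2}=\emptyset$ and rules out any connecting arc. The one point worth being fully explicit about is the verification at a $V_1$-end $\omega$, namely that the vertices of the basic neighbourhood $\hat{C}_\epsilon(S,\omega)$ are exactly those of $C(S,\omega)\subseteq V_1$ and that the half-open partial edges it contains either belong to $F_1$ or lie in $\mathring{F}$ and hence vanish in $Y$; you identify this as the delicate step and handle it correctly.
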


The following lemma ensures that being connected or being arc-connected are equivalent for closed subspaces of $|G|$.

\begin{lemma}{\cite{path-cyc-tree}*{Thm.\ 2.6}}\label{lem:arc_conn}
If $G$ is a locally finite connected graph, then every closed topologically connected subset of $|G|$ is arc-connected.
\end{lemma}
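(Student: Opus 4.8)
The plan is to reduce the assertion to \emph{path}-connectedness and then to build the connecting path by a limiting argument driven by the finite-cut structure of $|G|$. Recall the classical fact that in any Hausdorff space two \emph{distinct} points joined by a path, i.e.\ by a continuous image of $[0,1]$, are already joined by an arc. Since $|G|$ is metrizable, hence Hausdorff, this applies to every subspace, so it suffices to show that for a closed connected $X \subseteq |G|$ any two points $p, q \in X$ are joined by a path lying inside $X$. The combinatorial engine will be Lemma~\ref{lem:jumping-arc}: if $F$ is a finite cut of $G$ with sides $V_1, V_2$, then $|G| \setminus \mathring F = \overline{V_1} \dcup \overline{V_2}$ is a partition into two disjoint closed sets. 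Consequently, if the connected set $X$ meets both $\overline{V_1}$ and $\overline{V_2}$, it cannot be contained in $|G| \setminus \mathring F$, and therefore $X$ must meet $\mathring F$, i.e.\ contain an inner point of some edge of $F$. This is the mechanism that forces $X$ to cross concrete edges of $G$ whenever it has to pass from one side of a finite cut to the other.

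Next I would fix a metric on the compact space $|G|$ and an exhaustion $S_1 \subseteq S_2 \subseteq \cdots$ of $V(G)$ by finite vertex sets with each $G[S_n]$ connected and $\bigcup_n S_n = V(G)$. By local finiteness only finitely many edges leave $S_n$, so $G - S_n$ has only finitely many components; collapsing the closure in $|G|$ of each such component to a single point turns $|G|$ into the topological realisation of a finite graph $H_n$, with a continuous projection $\pi_n \colon |G| \to |H_n|$ that collapses connected regions, and these projections are compatible with $|G| = \varprojlim |H_n|$. Each image $X_n := \pi_n(X)$ is then a closed connected subset of the finite topological graph $|H_n|$ and hence arc-connected, so one can choose an arc $A_n$ in $X_n$ from $\pi_n(p)$ to $\pi_n(q)$. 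By the cut mechanism of the first paragraph, whenever such an arc leaves $G[S_n]$ into a collapsed region it must do so along an edge of the corresponding finite cut that $X$ genuinely meets; this lets one choose the $A_n$ coherently with respect to the bonding projections $|H_{n+1}| \to |H_n|$.

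Finally I would assemble the arcs $A_n$ into a single path in $X$ from $p$ to $q$ by passing to the limit: on the part of $A_n$ lying in $G[S_n]$ the arcs stabilise and contribute honest subpaths of the eventual path, while the collapsed regions, whose diameters tend to $0$, localise the yet-undetermined portions and pin down their limit points, be they vertices or ends. The main obstacle is exactly this limit step at the ends: one must guarantee that the stitched finite pieces converge to a genuinely \emph{continuous} map into the closed set $X$ and do not oscillate between an end and vertices near it, which would reproduce the ``topologist's sine curve'' failure of path-connectedness. This is precisely where the finiteness of the cuts involved is decisive: using a normal spanning tree (Theorem~\ref{thm:jung_NST}), whose fundamental cuts are finite by Lemma~\ref{lem:normal_end-faithful} and Lemma~\ref{lem:end-faith_fund_cut}, together with Lemma~\ref{lem:jumping-arc}, crossing from the territory of one end to that of another always costs a whole edge of a finite cut, and only finitely many such edges lie below any given resolution. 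This forces the approximating pieces to be Cauchy and their limit continuous, and once the limiting $p$--$q$ path in $X$ is obtained, the classical path-to-arc reduction of the first paragraph upgrades it to an arc in $X$ from $p$ to $q$, as required.
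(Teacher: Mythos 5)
Your overall skeleton---reduce arc-connectedness to path-connectedness via the standard Hausdorff fact, approximate $|G|$ by finite contraction minors $H_n$, and use Lemma~\ref{lem:jumping-arc} to force a closed connected set across finite cuts---is reasonable, and there is no in-paper proof to compare against: the paper cites this result from Diestel and K\"uhn without reproducing an argument. Several of your intermediate claims do hold: $\pi_n(X)$ is compact and connected, and closed connected subsets of finite topological graphs are indeed arc-connected (finite graphs are finitely Suslinian, so their subcontinua are Peano). The proof breaks down, however, exactly at the step you yourself single out as the main obstacle, and the justification you offer there does not work.

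Concretely, two things are missing. First, the coherent choice of the arcs $A_n$ is asserted rather than constructed: the bonding map $|H_{n+1}| \to |H_n|$ sends an arc of $X_{n+1}$ to a path that may fail to be injective, and an arc of $X_n$ need not lift to one of $X_{n+1}$; a compactness or K\H{o}nig-type selection (say, a Hausdorff-metric limit of the $A_n$) yields only a subcontinuum of $X$ containing $p$ and $q$, and inferring from this the existence of a $p$--$q$ \emph{path} is precisely the statement being proved---the topologist's sine curve shows that a continuum joining two points need not contain a path between them, so as written the limit step is circular. Second, your equicontinuity argument rests on the claim that ``only finitely many such edges lie below any given resolution'', which is false: in $|G|$ every neighbourhood of an end contains infinitely many edges of arbitrarily small diameter, and arcs in $|G|$ genuinely traverse infinitely many edges and can contain infinitely many ends---the Hamilton arcs produced by Theorem~\ref{main} contain \emph{every} end of $G$, of which there may be uncountably many. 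Hence the appeal to normal spanning trees and Lemmas~\ref{lem:normal_end-faithful} and~\ref{lem:end-faith_fund_cut} does not deliver the Cauchy property of the stitched parametrizations. What a repair needs is a \emph{uniform} crossing bound per fixed finite cut: to pass between the two sides of a finite cut $F$ a path must traverse an entire edge of $F$ (by Lemma~\ref{lem:jumping-arc}), and an injective path traverses each edge at most once, so each approximating arc crosses $F$ at most $|F|$ times; since finite cuts generate the topology of $|G|$ at ends, such bounds are what can force equicontinuity of suitably chosen parametrizations and continuity plus injectivity of the limit. None of this appears in your write-up, and it is the combinatorial core of the theorem rather than a routine verification.
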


The next lemma characterises the property of a standard subspace of being topologically connected, and due to Lemma~\ref{lem:arc_conn} also being arc-connected, in terms of a purely combinatorial condition, which we shall make use of later.

\begin{lemma}{\cite{diestel_buch}*{Lemma 8.5.5}}\label{lem:top_conn}
If $G$ is a locally finite connected graph, then a standard subspace of $|G|$ is topologically connected (equivalently: arc-connected) if and only if it contains an edge from every finite cut of $G$ of which it meets both sides.
\end{lemma}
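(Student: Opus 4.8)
Throughout write $S=\overline{F}$ for the standard subspace under consideration. Since $S$ is a closure it is closed in $|G|$, and $|G|$ is compact and Hausdorff, so $S$ is compact; moreover, by Lemma~\ref{lem:arc_conn} topological connectedness and arc-connectedness of $S$ coincide, and I will switch between them freely. The plan is to prove both implications of Lemma~\ref{lem:top_conn} by relating a finite cut to a (dis)connection of $S$: the forward direction rests on Lemma~\ref{lem:jumping-arc}, while the backward direction requires extracting a suitable finite cut by a compactness argument, which I expect to be the heart of the proof.

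\textbf{Forward direction.} Suppose $S$ is connected, and let $D=E(V_1,V_2)$ be a finite cut of $G$ such that $S\cap\overline{V_1}\neq\emptyset$ and $S\cap\overline{V_2}\neq\emptyset$. Assume toward a contradiction that $S$ contains no edge of $D$, i.e.\ $F\cap D=\emptyset$; then no inner point of a cut edge lies in $S$, so $S\subseteq|G|\setminus\mathring{D}$. Because $D$ is finite, Lemma~\ref{lem:jumping-arc} shows that $|G|\setminus\mathring{D}$ splits into two disjoint, relatively clopen pieces, one containing $\overline{V_1}$ and the other $\overline{V_2}$ (each end of $G$ falls on a single side, since a ray can cross the finite cut only finitely often and hence has a tail on one side). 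Intersecting these two pieces with $S$ partitions $S$ into two relatively closed sets, both nonempty because $S$ meets both sides, contradicting connectedness. Hence $S$ contains an edge of every such cut.

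\textbf{Backward direction.} Conversely, assume $S$ contains an edge from every finite cut of which it meets both sides, and suppose for contradiction that $S$ is disconnected, say $S=A\sqcup B$ with $A,B$ nonempty, disjoint, and relatively clopen; then $A,B$ are closed in $|G|$ and, by compactness of $|G|$, compact. I would produce a finite cut $D=E(V_1,V_2)$ with $F\cap D=\emptyset$ but $S$ meeting both sides, contradicting the hypothesis. To build it, cover the compact set $A$ by finitely many basic open neighborhoods of $|G|$ whose closures avoid the closed set $B$ (possible since a compact Hausdorff space is regular), chosen in the standard forms around vertices, inner edge points, and ends, so that each has a frontier consisting of \emph{finitely many inner edge points} (this uses local finiteness of $G$). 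Let $O$ be their union, so that $A\subseteq O$, $\overline{O}\cap B=\emptyset$, and $\partial O$ is a finite set of inner edge points; then set $V_1:=V(G)\cap O$, $V_2:=V(G)\setminus V_1$, and $D:=E(V_1,V_2)$.

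\textbf{Verification and main obstacle.} The cut $D$ is finite: each of its edges joins a vertex in $O$ to one outside $\overline{O}$, so its (connected) closed edge must meet $\partial O$ in an interior point, and distinct edges have disjoint interiors, giving an injection of $D$ into the finite set $\partial O$. The cut avoids $F$: for $e\in F$ the closed edge $\overline{e}\subseteq S$ is connected, hence $\overline{e}\subseteq A\subseteq O$ or $\overline{e}\subseteq B\subseteq|G|\setminus\overline{O}$, so both endpoints of $e$ lie on the same side. Finally $S$ meets both sides: any point of $A$ furnishes a point of $S\cap\overline{V_1}$, namely a vertex of $V_1$ if it is a vertex, an end that is a limit of $V_1$-vertices (coming from an end-neighborhood contained in $O$) if it is an end, or an endpoint of an $F$-edge lying in $A\subseteq O$ if it is an inner edge point; symmetrically $B$ yields a point of $S\cap\overline{V_2}$. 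This contradicts the hypothesis, so $S$ is connected. I expect the compactness extraction in the backward direction — in particular choosing the basic neighborhoods so that their frontiers consist of finitely many edge-interior points, which is exactly what makes $D$ a \emph{finite} cut — to be the crux; once such a cut disjoint from $F$ is in hand, the remaining properties follow from the clopen-plus-connected structure of $A$ and $B$.
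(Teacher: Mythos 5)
This lemma is quoted in the paper from Diestel's textbook (Lemma~8.5.5) and is not proved there, so there is no in-paper argument to compare against; judged on its own, your proof is correct and follows the standard textbook route (jumping-arc lemma for the forward direction, a compactness/finite-frontier extraction of a finite cut for the converse). The only point worth tightening is the final step: since ``meets both sides'' refers to the vertex sides $V_1,V_2$, you should observe that each nonempty relatively clopen piece of $\overline{F}$ must contain an inner point of some $F$-edge (a relatively open set containing only ends is impossible, as every neighbourhood of an end of $\overline{F}$ contains edge points), hence by connectedness of closed edges it contains a whole closed $F$-edge and in particular a vertex, which lands in $V_1$ resp.\ $V_2$; your appeal to $\overline{V_i}$ is slightly weaker than what is needed but is repaired by exactly this observation.
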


\section{Counterexamples for potential strengthenings of Theorem~\ref{main}}\label{sec:counterexamples}

\subsection{No perfect matching in \texorpdfstring{$G$}{G}}\label{subsec:matching}

In this first subsection we discuss why the statement of Theorem~\ref{main} becomes false if we omit the assumption of $G$ having a perfect matching, even if we additionally assume higher connectivity of the graph $G$ and focus on higher bi-powers.
For finite graphs, Li~\cite{bi-power_li} gave an example by constructing for every $k, \ell \in \mathbb{N}$ a $k$-connected balanced bipartite graph $G$ that does not admit a perfect matching and where $G^{\ell}_B$ is not Hamiltonian.

Let us now recall Li's example and afterwards slightly extend it to also yield an example for locally finite infinite graphs, of course except for the property of being balanced.
Let $s \geq \ell$ be an even number.
Fix disjoint vertex sets $V_0, \ldots, V_{s+1}$ where $|V_0| = |V_{s+1}| >  sk/2$ and $|V_i| = k$ for every $i \in [s]$.
Define the graph $L_{k, s}$ by setting $V(L_{k, s}) = \bigcup^{s+1}_{i=0} V_i$ and by adding all possible edges between $V_i$ and $V_{i+1}$ for all $i \in \{ 0, 1, \ldots, s+1 \}$.

Clearly, $L_{k, s}$ is a finite balanced $k$-connected bipartite graph.
By construction, the vertex set $V_0 \cup V_{s+1}$ forms an independent set of size greater than $|V(L_{k, s})|/2$.
Hence, neither does $L_{k, s}$ admit a perfect matching nor $(L_{k, s})^{\ell}_B$ a Hamilton cycle; consequently $(L_{k, s})^{\ell}_B$ cannot be Hamilton-laceable either.

For locally finite infinite graphs let us analogously define the graph $H_{k, \ell}$ as follows.
Let $V_i$ denote disjoint vertex sets for every $i \in \mathbb{N}$ where $|V_0| > \left \lfloor \frac{\ell}{2} \right \rfloor k$ and $|V_i| = k$ for every $i > 0$.
Similarly as before, set $V(H_{k, \ell}) = \bigcup_{i \in \mathbb{N}} V_i$ and add all edges between $V_i$ and $V_{i+1}$ for all $i \in \mathbb{N}$.

Clearly, $H_{k, \ell}$ is a locally finite $k$-connected bipartite graph.
Since $V_0$ is an independent set and $|V_0| > |N(V_0)| = |V_1|$, we also have that $H_{k, \ell}$ does not contain a perfect matching for any $\ell \in \mathbb{N}$.
Furthermore, in $(H_{k, \ell})^{\ell}_B$ the set $V_0$ has $\left \lfloor \frac{\ell}{2} \right \rfloor k < |V_0|$ many neighbours.
Hence, $(H_{k, \ell})^{\ell}_B$ cannot be Hamiltonian, and hence also not be Hamilton-laceable.

\subsection{Perfect matching in \texorpdfstring{$G^3_B$}{the bi-cube of G} instead of \texorpdfstring{$G$}{G}}

In~\cite{bi-power_li} Li also briefly addresses the question whether the assumption within the statement of Theorem~\ref{thm:Li-fin} of $G$ admitting a perfect matching might be weakened to $G^3_B$ admitting a perfect matching.
As Hamiltonicity for bipartite graphs implies the existence of a perfect matching, this is a necessary condition.
However, Li gave a counterexample for this condition to also be sufficient for finite graphs in the context of Theorem~\ref{thm:Li-fin}.
We shall now briefly recall Li's example and then slightly extend it to also yield a counterexample with respect to locally finite infinite graphs and Theorem~\ref{main}.

For $k \geq 3$ start with a tree that has precisely two vertices of degree $k+1$ while all other vertices have degree $1$.
Next subdivide each edge that is incident with a leaf twice and call the resulting graph $L_k$.
As Li noted, it is easy to check that $(L_k)^3_B$ does admit a perfect matching, but no Hamilton cycle.

For a locally finite infinite graph we now state the following construction.
Start with a star $K_{1, k}$ whose centre is $c$ and that has precisely $k$ leaves $\ell_1, \ldots, \ell_k$ for some $k \geq 3$.
Next subdivide each edge precisely twice.
Now take the disjoint union of the resulting graph with a ray $R = r_1r_2 \ldots$.
Finally, add the edge $cr_1$ and $k$ further vertices that are all only adjacent to $r_1$.
Let us call the resulting graph $H_k$.

As in Li's example, it is easy to check that $(H_k)^3_B$ admits a perfect matching.
The key observation why $(H_k)^3_B$ is not Hamiltonian is also the same as in Li's example, which we briefly recall with respect to our example.
Note that the vertices $\ell_1, \ldots, \ell_k$ have degree $2$ in $(H_k)^3_B$ and they all share $c$ as a common neighbour.
Hence, any Hamilton circle $\overline{C}$ of $(H_k)^3_B$, where $C$ denotes some subgraph of $(H_k)^3_B$, would impose $c$ to have degree $k \geq 3$ in~$C$, which is not possible.

\subsection{Hamilton-connectedness}\label{subsec:hamilton-con}

Another question that might arise when considering the statements of Theorem~\ref{thm:Li-fin} and Theorem~\ref{main} is whether even Hamilton-connectedness can be deduced.
To recall: the notion of \textit{Hamilton-connectedness} is analogously defined to the one of Hamilton-laceability for finite as well as for locally finite graphs but by considering all pairs of distinct vertices instead of just those that have odd distance from each other.
For finite bipartite graphs and the statement of Theorem~\ref{thm:Li-fin} we can clearly not deduce Hamilton-connectedness as the assumption of having a perfect matching ensures our graph to be balanced, while a Hamilton path with endvertices in the same bipartition class would violate this.

For locally finite infinite bipartite graphs we can also negatively answer the question via a counterexample very easily.
Consider a ray $R = r_1r_2 \ldots$.
The graph $R^3_B$ does not admit a Hamilton $r_2$--$r_{2k}$ arc for any $k > 1$.
To see this, note first that $r_1$ has degree $2$ in~$R^3_B$.
Hence, any potential spanning arc starting in $r_2$ would have the path $r_2r_1r_4$ as initial segment.
Now $r_3$ has degree $2$ in $R^3_B - r_2$, which forces the initial segment $r_2r_1r_4r_3r_6$.
By iterating this argument, we see that any potential spanning arc for $R^3_B$ starting in $r_2$ would contain an $r_2$--$r_{2k}$ path as an initial segment, preventing the existence of a Hamilton $r_2$--$r_{2k}$ arc.

\section{Proof of the main result}\label{sec:main-proof}

We start this section by very briefly sketching and discussing the rough methodological differences regarding how Li proved Theorem~\ref{thm:Li-inf} in~\cite{bi-power_li} and how we prove Theorem~\ref{main}.
Before we can relate the different approaches, we have to give some additional definitions first.

Let $G$ be a locally finite infinite graph and $\omega$ be an end of $G$.
We define the \textit{degree} of $\omega$ in $G$ to be the supremum of the number of vertex-disjoint rays in $G$ which are contained in $\omega$.
Furthermore, we call a continuous image of $S^1 \subseteq \mathbb{R}^2$ within $|G|$ that contains all vertices of $G$ a \textit{Hamilton curve} of $G$.
Note that in contrast to a Hamilton circle of $G$, a Hamilton curve of $G$ may traverse an end of $G$ several times.

Li's approach to verify Hamiltonicity of $G^3_B$ is to first start with an end-faithful spanning tree $T$ of $G$ which contains a perfect matching of $G$.
Then he carefully extends $T$ by edge sets of suitable cycles such that an end-faithful spanning subgraph $G'$ of $G^3_B$ is obtained that admits a Hamilton curve and each of whose ends has degree at most $3$.
It is easy to see that every Hamilton curve in $G'$ is actually already a Hamilton circle as the end degrees do not allow to traverse an end multiple times.
Due to the end-faithfulness, that Hamilton circle of $G'$ is also one of $G^3_B$.
Furthermore, Li chooses the set of suitable cycles in such a way that they also prove the existence of a Hamilton curve of $G'$.
For this he makes use of the following characterisation for the existence of Hamilton curves.

\begin{theorem}\cite{hamilton_curves}\label{thm:H-curve}
A locally finite connected graph $G$ has a Hamilton curve if and only if every finite vertex set of $G$ is contained in some finite cycle of $G$.
\end{theorem}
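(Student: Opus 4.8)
The plan is to prove the two implications separately, the construction of a Hamilton curve from the finite-cycle condition being the substantial direction. Throughout I use the standard metric realising $|G|$ as a compact space and the fact (forced by the note comparing curves with circles) that a Hamilton curve meets every vertex exactly once and is injective away from the ends, so that it behaves like a circle that is merely allowed to revisit ends.

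\textbf{Necessity ($\Rightarrow$).}
Suppose $\sigma\colon S^1\to|G|$ is a Hamilton curve and fix a finite vertex set $S$. Since $\sigma$ meets every vertex exactly once, at each vertex it arrives and departs along two distinct incident edges; hence the set $D$ of edges whose interiors $\sigma$ traverses is a spanning subgraph of $G$ in which every vertex has degree exactly $2$. Thus $D$ is a disjoint union of finite cycles and double rays. Only finitely many of its components meet $S$, say $D_1,\dots,D_r$, and in each $D_j$ I take the finite minimal subpath $P_j$ covering $S\cap D_j$; the $P_j$ are pairwise disjoint. It then remains to re-link $P_1,\dots,P_r$ into a single finite cycle through $S$ by finitely many pairwise internally disjoint finite paths of $G$. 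The required independent connections exist because $\sigma$ itself already joins these components cyclically through the finitely many relevant ends, so after extending each $P_j$ outward along its double ray one can shortcut the curve's end-passages into finite disjoint paths and splice them into a finite cycle containing $S$.

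\textbf{Sufficiency ($\Leftarrow$).}
Here I construct the curve as a limit of finite cycles. Fix a metric on $|G|$ whose edge lengths are summable, so that $|G|$ is a compact length space of finite total length; in particular every finite cycle has length at most this finite total. Enumerate $V(G)=\{v_1,v_2,\dots\}$ and set $S_n=\{v_1,\dots,v_n\}$; by hypothesis there is a finite cycle $C_n\supseteq S_n$. Parametrising each $C_n$ at constant speed as a map $\sigma_n\colon S^1\to|G|$ makes the family $\{\sigma_n\}$ uniformly Lipschitz, hence equicontinuous, so by the Arzel\`a--Ascoli theorem (using compactness of $|G|$) a subsequence converges uniformly to a continuous map $\sigma\colon S^1\to|G|$. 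To see that $\sigma$ meets every vertex, note that $\sigma_n$ passes through $v_i$ for all $n\ge i$, say at time $t^{(i)}_n$; a diagonal subsequence makes all the sequences $(t^{(i)}_n)_n$ converge, and uniform convergence then gives $\sigma(\lim_n t^{(i)}_n)=v_i$ for every $i$. Thus $\sigma$ is a continuous image of $S^1$ containing all vertices.

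\textbf{The main obstacle.}
The real difficulty is in this last direction: the map produced by Arzel\`a--Ascoli must be upgraded to a \emph{genuine} Hamilton curve, i.e.\ one meeting each vertex exactly once and injective away from the ends, rather than a degenerate continuous surjection. This properness is exactly what makes the equivalence correct: a single ray, for instance, admits a continuous surjection from $S^1$ onto $|G|$ yet has no finite cycle and no Hamilton curve. Distinct vertices automatically receive distinct limiting times, since $\sigma$ is a function; the work is to prevent a vertex from being visited twice and to control the behaviour at ends. I would handle this by first passing, via a further diagonalisation, to a subsequence along which the cyclic order induced by $C_n$ on each $S_m$ stabilises, thereby fixing a cyclic order on $V(G)$, and by reparametrising the cycles so that the vertices occupy pairwise disjoint, shrinking time-slots respecting this order. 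The limit then inherits a well-defined cyclic traversal visiting each vertex once, with repetitions occurring only at ends, as permitted for a Hamilton curve.
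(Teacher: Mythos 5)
First, note that the paper does not prove this statement at all: Theorem~\ref{thm:H-curve} is quoted from K\"undgen--Li--Thomassen \cite{hamilton_curves} and is only used to explain Li's method, so your attempt has to be measured against the proof in that reference. You read the definition correctly (a Hamilton curve must visit each vertex \emph{exactly once}, else the single ray falsifies necessity, as you observe), and your structural reduction is sound: since each vertex is hit once, every visited edge is crossed fully and at most once, so the traversed edges form a spanning subgraph $D$ of degree $2$, and for infinite $G$ all components of $D$ are double rays (a finite cycle component would consume all of $S^1$). But the final ``re-link'' step of your necessity argument has a genuine gap, and the reason you give for it is false as stated. Two consecutive \emph{relevant} double rays $D_j, D_{j+1}$ are in general separated along the curve by infinitely many other double rays of $D$, so $\sigma$ does not join them ``through the finitely many relevant ends'': the out-tail of $D_j$ and the in-tail of $D_{j+1}$ may converge to \emph{different} ends, and two rays belonging to different ends cannot in general be joined by a path avoiding a prescribed finite set, so the greedy disjoint-linkage argument (which works only for rays in a common end) does not apply. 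Worse, pairwise internally disjoint connections in your cyclic pattern need not exist at all: if $x$ is a cutvertex (and in the presence of a Hamilton curve $G-x$ has exactly two components, by counting crossings of the finite cuts at $x$), then the two connecting paths that cross between the sides are both forced through $x$, so the cycle must be assembled with a different pairing in which connections merge at $x$. This re-pairing/surgery is exactly where the cited proof does real work, and your sketch asserts it away.

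For sufficiency, your Arzel\`a--Ascoli setup is a nice observation (each $C_n$ uses every edge at most once, so its length is bounded by the summable total length, giving uniform Lipschitz constants for constant-speed parametrizations), but it only delivers a continuous image of $S^1$ containing all vertices --- the weak notion under which the theorem is false --- and, as you say yourself, the entire content lies in upgrading the limit. Your plan (stabilize the cyclic order of each $S_m$ and impose shrinking disjoint time slots) is weaker than what is needed: one must also stabilize \emph{how} $C_n$ runs through the components of $G-S_m$ (which component is entered, through which boundary edges), both because the slot-reparametrized family is no longer obviously equicontinuous and because otherwise the limit can acquire spurious visits to a vertex $v$ at times outside its slot, via segments of $C_n$ on the (varying) pair of $C_n$-edges incident with $v$. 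The proof in \cite{hamilton_curves} makes precisely this rigorous: contract the components of $G-S_n$ to obtain finite multigraphs, record the closed trail (``trace'') each cycle induces there --- visiting every vertex of $S_n$ exactly once and every edge at most once, so there are only finitely many traces per level --- and apply K\H{o}nig's lemma to obtain a consistent inverse system of traces from which the curve is constructed directly, with continuity at ends coming from the contraction structure. Your order-stabilization idea is a shadow of this trace argument and could plausibly be completed along those lines, but as written both directions have real holes: sufficiency is an acknowledged but unexecuted plan, and the necessity linkage step is not merely incomplete but incorrectly justified.
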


The way how Li precisely constructs the mentioned set of cycles goes back to his proof of Lemma~1 in~\cite{bi-power_li}, which is basically the finite version of Lemma~\ref{lem:key-lemma}.

Although using Theorem~\ref{thm:H-curve} can be a powerful and convenient tool, it does not seem immediately helpful for the purpose of verifying Hamilton-laceability (or Hamilton-connectedness).
Our way to prove Theorem~\ref{main} is to mimic Li's proof of Theorem~\ref{thm:Li-fin} in finite graphs.
Hence, we especially construct certain Hamilton arcs of the considered graph $G$ directly within the third bi-power of an end-faithful spanning tree of $G$ that contains a perfect matching of $G$.
This part of our proof happens in Lemma~\ref{lem:key-lemma}.
Later in the proof of Theorem~\ref{main} when we apply Lemma~\ref{lem:key-lemma}, we combine Hamilton arcs of suitable subgraphs of $G^3_B$ along an inductive argument to yield the desired Hamilton arcs of $G^3_B$.
The general idea of this part is the same as in Li's proof of Theorem~\ref{thm:Li-fin}.
However, we have to build our induction on a different parameter, namely on distances between vertices, to ensure that the parameter is always finite.
Similarly as Lemma~1 in~\cite{bi-power_li} was the key lemma in Li's proof of Theorem~\ref{thm:Li-fin}, now Lemma~\ref{lem:key-lemma} is our key lemma to prove Theorem~\ref{main}.

Now let us start preparing to prove Theorem~\ref{main}.
The following lemma ensures that we can always extend a perfect matching of a countable graph $G$ to an end-faithful spanning tree of $G$.
Although this lemma can be deduced from a more general lemma in Li's article~\cite{bi-power_li}*{Lemma~6}, we decided to include a proof here for the sake of keeping this article self-contained and because our proof seems simpler due to the less technical setting.

\begin{lemma}\label{lem:matching_in_tree}
Let $G$ be a countable connected graph and $M$ be a perfect matching of $G$.
Then there exists an end-faithful spanning tree of $G$ that contains $M$.
\end{lemma}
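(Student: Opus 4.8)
The plan is to reduce the statement to the known existence of end-faithful spanning trees (Theorem~\ref{thm:jung_NST} together with Lemma~\ref{lem:normal_end-faithful}) by contracting the matching. First I would form the quotient $G' := G/M$ obtained by contracting every edge of $M$ and passing to the underlying simple graph. Since $M$ is a perfect matching, each vertex of $G'$ is a class $\{u,v\}$ with $uv \in M$, and $G'$ is again countable and connected. By Theorem~\ref{thm:jung_NST} the graph $G'$ has a normal spanning tree $T'$, which by Lemma~\ref{lem:normal_end-faithful} is end-faithful in $G'$.

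Next I would lift $T'$ to a spanning tree of $G$. Enumerate the classes of $G'$ in an order extending the tree-order of $T'$, so that each class appears after its parent. Processing the classes in this order I build an increasing sequence of trees: for the root class I add both its vertices together with the matching edge joining them; for each later class $d$ with parent $c$ I choose any $G$-edge $\hat{f}$ between the classes $c$ and $d$, attach the corresponding endpoint of $d$ to the current tree via $\hat{f}$, and then attach the remaining vertex of $d$ via its matching edge. This yields a spanning tree $T$ of $G$ with $M \subseteq E(T)$ and, by construction, $T/M = T'$ (contracting $M$ collapses each class and leaves exactly the lifted tree edges).

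The heart of the argument is a single auxiliary claim: for every spanning subgraph $K$ of $G$ with $M \subseteq E(K)$, the contraction map $\pi\colon K \to K/M$ induces a bijection between the ends of $K$ and those of $K/M$ that both preserves and reflects equivalence of rays. To prove it I would show that every ray of $K$ projects to a walk meeting infinitely many classes and hence containing a ray of $K/M$, while every ray of $K/M$ lifts to a ray of $K$ (inserting a matching edge whenever two consecutive lifted edges meet a class in its two distinct vertices). Equivalence is matched by comparing finite separators: the saturation $\pi^{-1}(\pi(X))$ of a finite vertex set $X$ is still finite because each class has only two vertices, so finite separators of $K$ correspond exactly to finite separators of $K/M$, and a separating path projects and lifts while respecting these separators. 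Applying this claim to $K=G$ and to $K=T$ gives two ends-bijections compatible with $\pi$.

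Finally I would chain the equivalences to verify end-faithfulness of $T$ in $G$. For rays $R_1,R_2$ of $T$ with projected $T'$-rays $\rho_1,\rho_2$, the claim for $T$ gives $R_1 \sim_T R_2 \iff \rho_1 \sim_{T'} \rho_2$, end-faithfulness of $T'$ in $G'$ gives $\rho_1 \sim_{T'} \rho_2 \iff \rho_1 \sim_{G'} \rho_2$, and the claim for $G$ gives $\rho_1 \sim_{G'} \rho_2 \iff R_1 \sim_G R_2$; together these establish condition~(ii) of end-faithfulness. Condition~(i) follows by taking, for an end $\omega$ of $G$, the corresponding end $\omega'$ of $G'$, choosing a $T'$-ray in $\omega'$ (which exists since $T'$ is end-faithful), and lifting it to a ray of $T$ lying in $\omega$. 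The step I expect to be the main obstacle is the auxiliary claim, precisely the verification that contracting the possibly infinite matching neither creates nor destroys ends; once the saturation argument for finite separators is set up, the remainder is bookkeeping.
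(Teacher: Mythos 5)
Your proposal is correct and follows essentially the same route as the paper: contract $M$, take a normal (hence end-faithful) spanning tree of $G/M$, lift it to a spanning tree of $G$ containing $M$ by choosing one witness edge per tree edge of the quotient, and transfer end-faithfulness back via the correspondence of finite separators under the contraction. The only cosmetic difference is that you package the separator/ray correspondence as a general auxiliary claim about contracting a perfect matching, whereas the paper verifies the two end-faithfulness conditions directly for $T$ and $G$.
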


\begin{proof}
Let $G$ and $M$ be as in the statement of the theorem.
Now we apply Theorem~\ref{thm:jung_NST} with the graph $G/M$, which is still a countable connected graph, guaranteeing us the existence of a normal spanning tree $T'$ of $G/M$.
Next we uncontract every edge of $M$ in $T'$ and form, within $G$, a spanning tree $T$ of $G$.
Note for this that $M$ is a perfect matching of~$G$.
Hence, every vertex of $T'$ corresponds to an edge $m \in M$, and so we shall also work with $M$ as the vertex set of $T'$ for the rest of the proof.
In order to define $T$ we pick for every edge $e_{T'} = m_1m_2 \in E(T')$ an arbitrary edge $e_{T} \in E(G)$ that witnesses the existence of $m_1m_2 \in E(T')$, i.e. $e_{T} = u_1u_2$ where $u_1, u_2 \in V(G)$ and $u_i$ is an endvertex of $m_i$ for each $i \in \{ 1, 2 \}$.
Now we define $T$ as follows:
\begin{align*}
V(T) &:= V(G),\\
E(T) &:= M \cup \{ e_{T} \; | \; e_{T'} \in E(T') \}.
\end{align*}
Obviously, $T$ is still connected and does not contain a finite cycle.
Hence, $T$ is a spanning tree of $G$ that contains $M$ by definition.

Next we verify that $T$ is an end-faithful subgraph of $G$.
Let $\omega$ be any end of $G$ and $R \in \omega$.
Now $R/M$ is a ray in $G/M$, and since $T'$ is an end-faithful spanning tree of $G/M$, there exists a ray $R'$ in $T'$ such that $R' \sim_{G/M} R/M$.
Let $\mathcal{P}'$ be a set of infinitely many pairwise disjoint $R'$--$R/M$ paths in $G/M$ witnessing $R' \sim_{G/M} R/M$.
Now let $R_T$ be any ray in $T$ obtained from $T \left[\bigcup \{ e_T \; | \; e_{T'} \in E(R') \}\right]$ by adding edges from $M$.
By uncontracting edges from $M$, the path system $\mathcal{P}'$ now gives rise to a set of infinitely many pairwise disjoint $R_T$--$R$ paths in $G$. Hence, $R_T \subseteq T$ is a desired ray satisfying $R_T \sim_G R$.

Now let $R_1, R_2$ be two rays of $T$.
If $R_1 \sim_T R_2$, then $R_1$ must be a tail of $R_2$ or vice versa since $T$ is a tree.
Hence, $R_1 \sim_G R_2$.
Conversely, suppose for a contradiction that $R_1 \sim_G R_2$ but $R_1 \nsim_T R_2$.
First, note that $R_1/M \sim_{G/M} R_2/M$ holds.
Using that $T'$ is an end-faithful subgraph of $G/M$ we also know $R_1/M \sim_{T'} R_2/M$.
However, from $R_1 \nsim_T R_2$ we know that a finite set $S \subseteq V(G)$ exists separating $R_1$ and $R_2$ in $T$.
Now the set $S' := \{ m \in M \; | \; m \cap S \neq \emptyset \}$ defines a vertex set in $T'$ that separates $R_1/M$ and $R_2/M$ in~$T'$; a contradiction.
\end{proof}

The following question arose while preparing this article.
It basically asks whether we can also get a normal spanning tree to satisfy the conclusion of Lemma~\ref{lem:matching_in_tree} instead of just an end-faithful one.
Although neither a positive nor a negative answer to this question would substantially affect or shorten the proof of the main result of this paper, the question seems to be of its own in interest.
Hence, it is included here.

\begin{question}\label{question:NST-match-count}
Let $G$ be a countable connected graph and let $M$ be a perfect matching of~$G$.
Does $G$ admit a normal spanning tree that contains $M$?
\end{question}

Since the assumption of countability in Lemma~\ref{lem:matching_in_tree} was only used to ensure the existence of a normal spanning tree, the following question is a related, but more general one than Question~\ref{question:NST-match-count}.

\begin{question}\label{question:NST-match}
Let $G$ be a connected graph and $M$ be a perfect matching of~$G$ such that $G/M$ admits a normal spanning tree.
Does $G$ admit a normal spanning tree that contains~$M$?
\end{question}


Note that both questions above have positive answers when we restrict them to finite graphs.
We can easily include the desired perfect matching during the constructing of a depth-first search spanning tree, which in particular is a normal one.

During a discussion with Carsten Thomassen it turned out that Question~\ref{question:NST-match-count} has an easy counterexample, which then also negatively answers Question~\ref{question:NST-match}.
In the following lines we shall state the counterexample.
We shall follow the convention that the set of natural numbers $\mathbb{N}$ contains the number $0$.

For $i = 1, 2$ let $V^i = \{ v^i_0, v^i_1, \ldots \}$ be two disjoint countably infinite vertex sets.
Now we define our desired graph $G$ as follows.
Let $V(G) := V^1 \cup V^2$.
Furthermore, for each $i = 1, 2$ we define edge sets $E^i := \{ v^i_jv^i_{j+1} \; | \; j \in \mathbb{N} \}$.
Finally, we define the edge set $E^* = \{ v^1_{2k}v^2_{2k} \; | \; k \in \mathbb{N} \setminus \{ 0 \} \}$, and set $E(G) := E^1 \cup E^2 \cup E^*$.
Next we define a perfect matching $M$ of $G$ which cannot be included in any normal spanning tree of $G$, independent of the choice of the root vertex for the tree.
Set $M := \{ v^i_{2k}v^i_{2k+1} \; | \; i \in \{ 1, 2\} \, , \, k \in \mathbb{N} \}$.
See Figure~\ref{counterexample} for a picture of the graph $G$ together with its perfect matching $M$.

\begin{figure}[htbp]
\centering
\includegraphics[width=9cm]{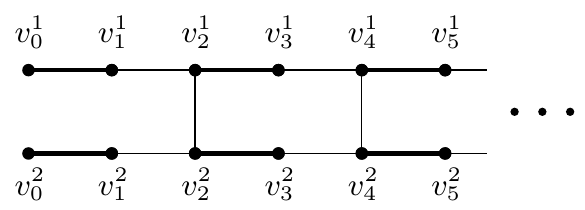}
\caption{The graph $G$ with the perfect matching $M$ indicated by bold edges.}
\label{counterexample}
\end{figure}

Next we prove why no normal spanning tree of $G$ can contain~$M$, independent of the choice for the root of the tree.
Note first that any normal spanning tree $T$ of $G$ must contain infinitely many edges from~$E^*$.
This is because otherwise $T$ would contain two disjoint rays in the end of $G$, which contradicts the fact that normal spanning trees are end-faithful and, therefore, contain only a unique ray in each end of $G$.
See \cite{Jung_countable} for a proof of this fact.
Now suppose for a contradiction that $T$ is a normal spanning tree of $G$ containing $M$.
Let $r \in V(T)$ denote the root of $T$, and let $j \in \{ 1, 2 \}$ and $\ell \in \mathbb{N}$ be such that $r = v^j_{\ell}$.
Next let $p \in \mathbb{N}$ be the smallest index for which $p > \ell$ and $v^1_pv^2_p \in E(T)$ hold.
Similarly, let $q \in \mathbb{N}$ be the second smallest index for which $q > \ell$ and $v^1_qv^2_q \in E(T)$ hold.
Clearly, the paths $v^1_p \ldots v^1_{q}$ and $v^2_p \ldots v^2_{q}$ cannot both be contained in $T$ since then $T$ would contain a finite cycle.
Without loss of generality say that $v^1_p \ldots v^1_{q}$ is not contained in $T$.
Then let $s \in \mathbb{N}$ be the smallest index such that $p < s < q$ and $v^1_sv^1_{s+1} \notin E(T)$.
Since $M$ is contained in $T$, we know that $s$ is an odd number.
So $v^1_s$ has degree $2$ in $G$ and is a leaf in $T$.
Since $\ell < s$, the vertices $v^1_s$ and $v^1_{s+1}$ lie on different branches in $T$.
Hence, the edge $v^1_sv^1_{s+1}$ contradicts the normality of $T$.
Therefore, we can conclude that it is impossible for any normal spanning tree of $G$ to contain the perfect matching $M$.


Now we continue with the key lemma for the proof of our main result.

\begin{lemma}\label{lem:key-lemma}
Let $G$ be a locally finite connected bipartite graph and $M$ be a perfect matching of $G$.
Furthermore, let $T$ be an end-faithful spanning tree of $G$ containing $M$.
Then for every edge $xy \in M$ there exists a Hamilton $x$--$y$ arc of $G^3_B$ within $\overline{T^3_B} \subseteq |G^3_B|$
\end{lemma}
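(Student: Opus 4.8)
The plan is to build the Hamilton $x$--$y$ arc of $G^3_B$ recursively along the tree $T$, mirroring the finite case (Li's Lemma~1) but taking care that the resulting arc is a genuine arc in $|G^3_B|$, i.e.\ a closed homeomorphic image of $[0,1]$ that passes through the ends of $G$ correctly and lies inside the standard subspace $\overline{T^3_B}$. The key structural feature I would exploit is that $T$, being a tree containing the perfect matching $M$, decomposes naturally into the matching edges of $M$ together with the tree-edges joining them; contracting $M$ yields a tree $T/M$ whose vertices are the edges of $M$. I would root $T$ (say at $x$, or at the $M$-edge $xy$) and argue by a structural recursion on this rooted tree, constructing for each matched pair $\{a,b\}$ a local arc-segment that enters the subtree at one endpoint of the $M$-edge and leaves at the other, threading through all descendant vertices.

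First I would set up the recursion on finite initial pieces. For a vertex $v$ of $T$, let $T_v$ be the subtree below the $M$-edge containing $v$; the inductive claim is that for each $M$-edge $ab$ one can find, inside $(T_a^3)_B$ restricted to the vertices of the relevant subtree together with its ends, an arc that has $a$ and $b$ as endpoints and contains every vertex of that subtree. The crucial combinatorial input is that in $G^3_B$ two vertices at tree-distance $1$ or $3$ in $T$ are adjacent, so a matched pair $\{a,b\}$ and an adjacent matched pair $\{a',b'\}$ (with $aa'\in E(T)$, say) can be stitched together: the four vertices $a,b,a',b'$ have pairwise odd $T$-distances $1$ or $3$, giving enough edges in $T^3_B$ to route the arc from $b$ through the child block and back out to continue toward $y$. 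I would spell out these local gadgets exactly as in the finite argument, handling the ``double subdivision'' parity bookkeeping that forces one to use edges of length $3$ to preserve bipartiteness.

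The genuinely infinite part, and what I expect to be the main obstacle, is assembling the infinitely many finite segments into a single \emph{arc} rather than a ray or a disconnected union of rays. Here I would invoke the topological machinery from Section~\ref{subsec:top}: the segments are built inside the standard subspace $\overline{T^3_B}$, and to conclude that their union together with the relevant ends forms an arc I would check the finite-cut condition of Lemma~\ref{lem:top_conn} to get arc-connectedness, and use Lemma~\ref{lem:end-faith_fund_cut} to guarantee that every fundamental cut of $T$ is finite, so that the arc crosses each such cut only finitely often and the end-points of the partial arcs converge to ends of $G$ in $|G^3_B|$. The end-faithfulness of $T$ is exactly what ensures that each end of $G^3_B$ is reached by a unique ray of the construction, preventing the would-be arc from ``branching'' at an end (which would make it fail to be a homeomorphic image of $[0,1]$). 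Concretely, I would show that the constructed point set is closed, connected, and has exactly two points ($x$ and $y$) of ``degree one'' while being locally an interval elsewhere, and then appeal to Lemma~\ref{lem:arc_conn} and the standard characterisation of arcs in $|G^3_B|$ to upgrade connectedness to the existence of the desired $x$--$y$ arc. The delicate check will be that whenever the construction runs out along an infinite branch of $T$, the accumulated segment closes up to a single end rather than leaving a gap or creating a double point, which is precisely where finiteness of the fundamental cuts (Lemma~\ref{lem:end-faith_fund_cut}) does the decisive work.
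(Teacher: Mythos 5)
Your strategy is essentially the one the paper follows: grow $T$ outward from the matching edge $xy$ in layers indexed by $M$-edges, build at each stage a finite Hamilton path between the two distinguished endpoints using the adjacencies at $T$-distance $1$ and $3$ (the paper's three local cases are exactly your ``local gadgets''), pass to a limit edge set $A$, and verify arc-hood inside $\overline{T^3_B}$ using Lemmas~\ref{lem:top_conn}, \ref{lem:arc_conn} and~\ref{lem:end-faith_fund_cut}. So the skeleton is right and matches the paper.

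The one place your plan is not yet a proof is the final verification that the limit object is an arc. The finite-cut criterion of Lemma~\ref{lem:top_conn} only gives you that $\overline{A}$ \emph{contains some} $x$--$y$ arc, not that the whole vertex-spanning set $\overline{A}$ \emph{is} one; and your proposed substitute --- ``exactly two points of degree one, locally an interval elsewhere, plus a standard characterisation of arcs'' --- is not among the paper's stated tools, and at the ends (where ``locally an interval'' is not a pointwise checkable condition and where a closure of an edge set can pass through an end more than once) such degree-based characterisations are exactly where the extra work hides. The paper instead argues by edge-minimality: for every edge $e \in A$ it constructs a \emph{finite} cut $F$ of $G^3_B$ separating $x$ from $y$ with $A \cap F = \{e\}$, obtained by taking the cut of the finite path $A_n$ around $e$, extending its two sides over the components of $T - E(T_n)$, and bounding the resulting cut of $G$ (hence of $G^3_B$) by finitely many fundamental cuts via Lemma~\ref{lem:end-faith_fund_cut}. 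By Lemma~\ref{lem:jumping-arc}, $\overline{A-e}$ then contains no $x$--$y$ arc, so any $x$--$y$ arc inside $\overline{A}$ must use every edge of $A$ and therefore equals $\overline{A}$, which contains all vertices. You would need to supply this (or an equivalent) minimality argument, and also record the stabilisation property of your recursion --- each edge is eventually in all, or eventually in none, of the partial paths --- so that the limit edge set is well defined; everything else in your outline agrees with the paper.
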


\begin{proof}
Let $G$, $T$ and $M$ be as in the statement of the lemma.
First note that, as $G$ is bipartite, clearly $T^3_B \subseteq G^3_B$.
Now let $m^1_0 = x^1_0y^1_0$ be an arbitrary edge from $M$.
We recursively make the following definitions:
Set $T_0 = T[\{x^1_0, y^1_0\}]$.
Now for every $i \in \mathbb{N}$, set $T_{i+1}$ to be the subtree of $T$ induced by all vertices of $T$ which are contained in some edge $m' \in M$ such that $m'$ has one endvertex in distance at most $1$ to $T_i$ within $T$.
Next we shall recursively define a Hamilton $x^1_0$--$y^1_0$ path $A_i$ in each $(T_i)^3_B$ for every $i \in \mathbb{N}$ where $A_i$ contains each edge $m \in M$ which is contained in $T_i - V(T_{i-1})$, where $V(T_{-1}) := \emptyset$.

First set $E(A_0) := m^1_0$.
Now suppose we have already defined $A_i$ and want to define~$A_{i+1}$.
Let us enumerate the edges in $T_{i} - V(T_{i-1})$ that are contained in $E(A_{i}) \cap M$ by $m^1_{i}, \ldots, m^{p_i}_{i}$ for some $p_i \in \mathbb{N}$.
Furthermore, let us fix names for the endvertices of each such edge by writing $m^j_{i} = x^j_iy^j_i$ for every $j \in [p_i]$.
By definition, either $x^j_i$ or $y^j_i$ is adjacent to some endvertex $y^{q}_{i-1}$, for some $q \in [p_{i-1}]$, of an edge $m^q_{i-1} \in M$ contained in $T_{i-1}$, w.l.o.g.~say $x^j_i$.
Let $T_{x^j_i}$ denote the component of $T_{i+1} - y^q_{i-1}x^j_i$ containing $x^j_i$ if $i > 0$, and set $T_{x^1_0} := T_1$ for $i = 0$.
Now we shall extend $A_i$ further into each $T_{x^j_i}$ (unless $E(T_{x^j_i}) = m^j_i$) by replacing the edges $m^j_i$ for all $j \in [p_i]$ in $A_i$ by a Hamilton $x^j_i$--$y^j_i$ path $P^j_{i}$ of $(T_{x^j_i})^3_B$ to form $A_{i+1}$.
We shall distinguish three cases of how to do this for each $j \in [p_i]$.

\begin{case}
Both components of $T_{x^j_i}-m^j_i$ are trivial.
\end{case}

In this case we keep the edge $m^j_i$ to also be an edge of $A_{i+1}$.

\begin{case}
Precisely one component of $T_{x^j_i}-m^j_i$ is trivial, w.l.o.g.~say the one containing $x^j_i$.
\end{case}

In this situation let $N(y^j_i) = \{ x^j_i, v_1, v_2, \ldots, v_k\}$ for some $k \geq 1$.
Since $M$ is a perfect matching of $G$, each $v_r$ must be contained in some edge from $M$, say $v_rw_r \in M$ for every $r \in [k]$.
Now set $P^j_i := x^j_iw_1v_1w_2v_2 \ldots w_kv_ky^j_i$.

\begin{case}
No component of $T_{x^j_i}-m^j_i$ is trivial.
\end{case}

Now let ${N(y^j_i) = \{ x^j_i, v_1, v_2, \ldots, v_k\}}$ for some $k \geq 1$ and ${N(x^j_i) = \{ y^q_{i-1}, y^j_i, a_1, a_2, \ldots, a_{\ell}\}}$ for some $\ell \geq 1$.
Using as before that $M$ is a perfect matching of $G$, each $v_r$ and each $a_s$ must be contained in some edge of $M$, say $v_rw_r \in M$ for every $r \in [k]$ and $a_sb_s \in M$ for every $s \in [\ell]$.
Finally, set $P^j_i := x^j_iw_1v_1w_2v_2 \ldots w_kv_ka_1b_1a_2b_2 \ldots a_kb_ky^j_i$.\\

This completes the definition of $A_{i+1}$.
Note that this definition ensures that an edge $e \in A_i$ is also contained in $A_{i+1}$ except $e = m^j_i \in M$ for some $j \in [p_i]$ as above and at least one component of $T_{x^j_i}-m^j_i$ is non-trivial.

Next we define our desired Hamilton $x^1_0$--$y^1_0$ arc in $\overline{T^3_B} \subseteq |G^3_B|$.
For this we set
\[ A:= \left \lbrace e \in \bigcup_{n \in \mathbb{N}} E(A_n) \; | \; e \textnormal{ is contained in all but finitely many } A_n \right \rbrace. \]
Now we claim that $\overline{A}$ is a desired $x^1_0$--$y^1_0$ Hamilton arc of $G^3_B$.

Note first that by definition $\overline{A} \subseteq \overline{T^3_B} \subseteq |G^3_B|$ and $\overline{A}$ contains all vertices of $T$, and therefore all vertices of $G$ because $T$ is a spanning tree of $G$.
It remains to verify that $\overline{A}$ is an $x^1_0$--$y^1_0$ arc.
We shall do this by showing that $\overline{A}$ contains an $x^1_0$--$y^1_0$ arc but $\overline{A-e}$ does not for any edge $e \in A$, which implies that $\overline{A}$ is an $x^1_0$--$y^1_0$ arc.

To prove that $\overline{A}$ contains an $x^1_0$--$y^1_0$ arc in $|G^3_B|$ it is enough to show that $A$ intersects every finite cut of $G^3_B$ due to Lemma~\ref{lem:top_conn}.
Now let $F \subseteq E(G^3_B)$ be an arbitrary finite cut of~$G^3_B$.
Hence, $F \subseteq E(G^3_B[V(T_n)])$ for some $n \in \mathbb{N}$.
Since $A_{n+1}$ is a Hamilton $x^1_0$--$y^1_0$ path in $(T_{n+1})^3_B$ and $E(A_{n+1}) \cap E(G^3_B[V(T_n)]) = A \cap E(G^3_B[V(T_n)])$, we know that $A$ intersects~$F$.

For the remaining argument that $\overline{A-e}$ does not contain an $x^1_0$--$y^1_0$ arc for any edge $e \in A$, we shall first find a finite cut $F$ of $G^3_B$ that $A$ intersects precisely in $e$.
Let $n \in \mathbb{N}$ such that $e \in E(G^3_B[V(T_n)])$.
Since $A_n$ is a Hamilton $x^1_0$--$y^1_0$ path in $G^3_B[V(T_n)]$, there exists a cut $F_n = E(L_n, R_n)$ of $G^3_B[V(T_n)]$ such that $x^1_0 \in L_n$ and $y^1_0 \in R_n$ and $E(A_n) \cap F_n = \{ e \}$.
Let $\mathcal{C}_L$ and $\mathcal{C}_R$ be the sets of all components of $T-E(T_n)$ that intersect $L_n$ and $R_n$, respectively.
Next we extend the bipartition $(L_n, R_n)$ of $V(T_n)$ to a bipartition $(L, R)$ of~$V(T)$.
We set $L := \bigcup \{ V(C) \; | \; C \in \mathcal{C}_L \}$ and $R := \bigcup \{ V(C) \; | \; C \in \mathcal{C}_R \}$.
Especially, this yields $x^1_0 \in L_n \subseteq L$ and $y^1_0 \in R_n \subseteq R$.
Furthermore, $T$ intersects the cut $F_G := E(L, R)$ of $G$ in the same edges as $T_n$ intersects $F_n$.
Hence, $T$ intersects $F_G$ in only finitely many edges.
Next note that any edge $f = uv \in F_G \setminus E(T)$ lies in the fundamental cut $D_{g}$ of $G$ w.r.t. $T$ for every edge $g$ that lies on the $u$--$v$ path in $T$.
Especially, $f$ lies in $D_{g'}$ for some of the finitely many edges $g' \in F_G \cap E(T)$.
As every fundamental cut of $G$ w.r.t. $T$ is finite by Lemma~\ref{lem:end-faith_fund_cut}, this implies that $F_G$ is a finite cut of $G$.
Since $G$ is locally finite and by definition of $G^3_B$, we furthermore get that the bipartition $(L, R)$ of $V(G) = V(G^3_B)$ also yields a finite cut $F$ of~$G^3_B$.
By the definition of the $A_i$'s we know that every $A_m$ for $m \geq n$ also satisfies $E(A_m) \cap F = \{ e \}$.
Hence, $A \cap F = \{ e \}$.

To complete the argument, note that every $x^1_0$--$y^1_0$ arc in $|G^3_B|$ must intersect $F$ by Lemma~\ref{lem:jumping-arc}.
This, however, implies that $\overline{A-e}$ cannot not contain an $x^1_0$--$y^1_0$ arc.
\end{proof}

Now we are able to prove our main result.

\begin{proof}[Proof of Theorem~\ref{main}]
Let $u$ and $v$ be two vertices from different bipartition classes of $G$ and let $M$ be a perfect matching of $G$.
By Lemma~\ref{lem:matching_in_tree} there exists an end-faithful spanning tree $T$ of $G$ that contains $M$.
Since $G$ is bipartite, the distance between $u$ and $v$ in $T$ is also odd.
We now prove the statement of the theorem by induction on $d := |E(uTv) \setminus M|$.
Since $|E(uTv)|$ is odd and $|E(uTv) \cap M| \leq \left \lfloor \frac{|E(uTv)|}{2} \right \rfloor$ as $M$ is a perfect matching of $G$, we know that $d = 0$ holds precisely when $E(uTv) = \{ uv \}$ and $uv \in M$.
Now the statement follows from Lemma~\ref{lem:key-lemma}.

Next let us verify the statement for $d > 0$ while assuming we have verified it for all smaller values for $d$.
There must exist an edge $xy \in E(uTv) \setminus M$, say without loss of generality $x \in E(uTy)$.
As $T$ is end-faithful, the fundamental cut $D_{xy}$ w.r.t.~$T$ in $G$ is finite by Lemma~\ref{lem:end-faith_fund_cut}.
Note that $T$ is also an end-faithful spanning tree of $G^3_B$.
To see this observe first that given any ray $R$ in $G^3_B$ we obtain by applying Lemma~\ref{lem:star-comb} to $V(R)$ within $T \subseteq G^3_B$, and due to $T$ being locally finite, a comb whose spine is equivalent to $R$ in $T$, and hence also in $G^3_B$.
Second, let two non-equivalent rays $R_1, R_2$ in $T$ be given.
As $T$ is an end-faithful spanning tree of $G$, there exists a finite vertex set $S \subseteq V(G)$ such that $R_1 - S$ and $R_2 - S$ lie in different components of $G-S$.
By definition of $G^3_B$ and due to the locally finiteness of $G$, we get that $S \cup N(S)$ is a finite vertex set separating $R_1$ and $R_2$ in $G^3_B$.
Now since $G$ is locally finite and $T$ is also an end-faithful spanning tree of $G^3_B$, we know that the fundamental cut $D^B_{xy}$ w.r.t.~$T$ in $G^3_B$ is finite as well.
For ease of notation set $H := G[T_x]$ and $K := G[T_y]$.
Due to the finite fundamental cut $D_{xy}$, we know that the spaces $|H|$ and $|K|$ are homeomorphic to the subspaces of $|G|$ induced by the closures $\overline{H}$ and $\overline{K}$, respectively.
Furthermore, $\overline{H} \cap \overline{K} = \emptyset$ by Lemma~\ref{lem:jumping-arc}.
The same observations hold for $H^3_B$ and $K^3_B$ since $T$ is also an end-faithful spanning tree of $G^3_B$ implying, as note before, that the fundamental cut $D^B_{xy}$ w.r.t.~$T$ in $G^3_B$ is finite.
Next we shall make a case distinction of how to apply our induction hypothesis.
Note that since $|E(uTv)|$ is odd, either $|E(uTx)|$ and $|E(yTv)|$ are both odd or they are both even.

\setcounter{case}{0}
\begin{case}
$|E(uTx)|$ and $|E(yTv)|$ are odd.
\end{case}

In this case we apply our induction hypothesis with the graphs $H$ and $K$, their perfect matchings $M_x := M \cap E(T_x)$ and $M_y := M \cap E(T_y)$, the end-faithful spanning trees $T_x$ and $T_y$, which contain $M_x$ and $M_y$ respectively, and the pairs of vertices $(u, x)$ and $(y, v)$.
Hence we obtain a Hamilton $u$--$x$ arc $A_x$ of $H^3_B$ within within $\overline{(T_x)^3_B}$ and a Hamilton $y$--$v$ arc $A_y$ of $K^3_B$ within within $\overline{(T_y)^3_B}$.
Since $\overline{H^3_B} \cap \overline{K^3_B} = \emptyset$ holds within $|G^3_B|$, we obtain a Hamilton $u$--$v$ arc of $G^3_B$ by joining $A_x$ and $A_y$ via the edge $xy$.

\begin{case}
$|E(uTx)|$ and $|E(yTv)|$ are even.
\end{case}

First note for this case that there exist edges $xx', yy' \in M$ since $M$ is a perfect matching.
As $M \subseteq E(T)$ by assumption, we know that $xx' \in E(T_x)$ and $yy' \in E(T_y)$.
Also note that $|E(uTv \setminus M)| = |E(uTx' \setminus M)| + |E(y'Tv \setminus M)| + 1$, whether $x'$ or $y'$ are contained in $uTv$ or not, and that $|E(uTx')|$ and $|E(y'Tv)|$ are both odd.
Furthermore, $\textnormal{dist}_T(x', y') = 3$, and, therefore, $\textnormal{dist}_G(x', y') \in \{1, 3\}$.
Hence, $x'y' \in E(T^3_B) \subseteq E(G^3_B)$.
Due to these observations we can apply our induction hypothesis as in Case~1 but with $x'$ and $y'$ instead of $x$ and~$y$, yielding again the desired Hamilton $u$--$v$ arc of $G^3_B$.
\end{proof}

\section*{Acknowledgements}
Karl Heuer was supported by the European Research Council (ERC) under the European Union's Horizon 2020 research and innovation programme (ERC consolidator grant DISTRUCT, agreement No.\ 648527).

\begin{bibdiv}
\begin{biblist}

\bib{diestel_buch}{book}{
   author={Diestel, Reinhard},
   title={Graph theory},
   series={Graduate Texts in Mathematics},
   volume={173},
   edition={5},
   publisher={Springer, Berlin},
   date={2017},
   pages={xviii+428},
   isbn={978-3-662-53621-6},
   review={\MR{3644391}},
   doi={10.1007/978-3-662-53622-3},
}

\bib{diestel_arx}{article}{
	author={Diestel, Reinhard}, 
	title={Locally finite graphs with ends: a topological approach}, 
	date={2012},
	eprint={0912.4213v3},
	note={Post-publication manuscript},
}

\bib{path-cyc-tree}{article}{
   author={Diestel, Reinhard},
   author={K\"{u}hn, Daniela},
   title={Topological paths, cycles and spanning trees in infinite graphs},
   journal={European J. Combin.},
   volume={25},
   date={2004},
   number={6},
   pages={835--862},
   issn={0195-6698},
   review={\MR{2079902}},
   doi={10.1016/j.ejc.2003.01.002},
}

\bib{Freud-Equi}{article}{
   author={Diestel, Reinhard},
   author={K\"{u}hn, Daniela},
   title={Graph-theoretical versus topological ends of graphs},
   note={Dedicated to Crispin St. J. A. Nash-Williams},
   journal={J. Combin. Theory Ser. B},
   volume={87},
   date={2003},
   number={1},
   pages={197--206},
   issn={0095-8956},
   review={\MR{1967888}},
   doi={10.1016/S0095-8956(02)00034-5},
}

\bib{inf-cyc-1}{article}{
   author={Diestel, Reinhard},
   author={K\"{u}hn, Daniela},
   title={On infinite cycles I},
   journal={Combinatorica},
   volume={24},
   date={2004},
   number={1},
   pages={69--89},
   issn={1439-6912},
   review={\MR{2057684}},
   doi={10.1007/s00493-004-0005-z},
}

\bib{inf-cyc-2}{article}{
   author={Diestel, Reinhard},
   author={K\"{u}hn, Daniela},
   title={On infinite cycles II},
   journal={Combinatorica},
   volume={24},
   date={2004},
   number={1},
   pages={91--116},
   issn={1439-6912},
   review={\MR{2057685}},
   doi={10.1007/s00493-004-0006-y},
}

\bib{fleisch}{article}{
   author={Fleischner, Herbert},
   title={The square of every two-connected graph is Hamiltonian},
   journal={J. Combinatorial Theory Ser. B},
   volume={16},
   date={1974},
   pages={29--34},
   issn={0095-8956},
   review={\MR{332573}},
   doi={10.1016/0095-8956(74)90091-4}
}

\bib{Freud}{article}{
   author={Freudenthal, Hans},
   title={\"{U}ber die Enden topologischer R\"{a}ume und Gruppen},
   language={German},
   journal={Math. Z.},
   volume={33},
   date={1931},
   number={1},
   pages={692--713},
   issn={0025-5874},
   review={\MR{1545233}},
   doi={10.1007/BF01174375},
}

\bib{agelos}{article}{
   author={Georgakopoulos, Agelos},
   title={Infinite Hamilton cycles in squares of locally finite graphs},
   journal={Adv. Math.},
   volume={220},
   date={2009},
   number={3},
   pages={670--705},
   issn={0001-8708},
   review={\MR{2483226}},
   doi={10.1016/j.aim.2008.09.014}
}

\bib{Jung_countable}{article}{
   author={Jung, H.~A.},
   title={Wurzelb\"{a}ume und unendliche Wege in Graphen},
   language={German},
   journal={Math. Nachr.},
   volume={41},
   date={1969},
   pages={1--22},
   issn={0025-584X},
   review={\MR{266807}},
   doi={10.1002/mana.19690410102},
}

\bib{hamilton_curves}{article}{
   author={K\"{u}ndgen, Andr\'{e}},
   author={Li, Binlong},
   author={Thomassen, Carsten},
   title={Cycles through all finite vertex sets in infinite graphs},
   journal={European J. Combin.},
   volume={65},
   date={2017},
   pages={259--275},
   issn={0195-6698},
   review={\MR{3679848}},
   doi={10.1016/j.ejc.2017.06.006},
}

\bib{bi-power_li}{article}{
   author={Li, Binlong},
   title={Hamiltonicity of bi-power of bipartite graphs, for finite and infinite cases},
   date={2019},
   eprint={1902.06403},
   note={Preprint}
}

\bib{sekanina}{article}{
   author={Sekanina, Milan},
   title={On an ordering of the set of vertices of a connected graph},
   language={English, with Russian summary},
   journal={Spisy P\v{r}\'{\i}rod. Fak. Univ. Brno},
   volume={1960},
   date={1960},
   pages={137--141},
   review={\MR{0140095}}
}

\end{biblist}
\end{bibdiv}

\end{document}